\documentclass{scrartcl}
\usepackage[british]{babel}
\usepackage[textwidth=150mm,top=2cm]{geometry}

%
%
% additional packages ----------------------------------------------------
\usepackage{dsfont, amsthm, mathtools,  enumerate,amsmath,amsrefs}

\usepackage{dsfont}
\usepackage{amssymb}
\usepackage{aurical}
\usepackage{mathtools}
\usepackage{longtable}
\usepackage{wasysym}
\usepackage{euscript}
\usepackage{mathrsfs}
\usepackage{tikz}
\usepackage[hidelinks]{hyperref}

\usepackage{libertine}
%
%
% NEWcommands  -----------------------------------------------------------
%
\newcommand{\QR}[2]{
\raisebox{0.5ex}{\small\ensuremath{#1}}
\ensuremath{\mkern-3mu}\diagup\ensuremath{\mkern-3mu}
\raisebox{-0.75ex}{\small\ensuremath{#2}}}

\newcommand{\Mfg}{\text{\Fontlukas M}\ }

\newcommand{\I}{\mathfrak{T}}

\newcommand\bcdot{\ensuremath{%
  \mathchoice%
   {\mskip\thinmuskip\lower0.2ex\hbox{\scalebox{1.5}{$\cdot$}}\mskip\thinmuskip}}%
   {\mskip\thinmuskip\lower0.2ex\hbox{\scalebox{1.5}{$\cdot$}}\mskip\thinmuskip}%
   {\lower0.3ex\hbox{\scalebox{1.2}{$\cdot$}}}%
   {\lower0.3ex\hbox{\scalebox{1.2}{$\cdot$}}}%
}
\providecommand{\skalarProd}[2]{#1\!\bcdot\!#2}

\newcommand{\aus}{\ \! \raisebox{1pt}{\scalebox{1.3}{$\lrcorner$}} \ \!}
\newcommand{\ausR}{\ \! \raisebox{1pt}{\scalebox{1.3}{$\llcorner$}}\ \!}

\renewcommand{\P}{\mathcal{P}}
\renewcommand{\S}{\mathcal{S}}

\usepackage{tikz}
\newcommand{\tikzoverset}[2]{%
  \tikz[baseline=(X.base),inner sep=0pt,outer sep=0pt]{%
    \node[inner sep=0pt,outer sep=0pt] (X) {$#2$};
    \node[yshift=1pt] at (X.north) {\fontsize{6}{0}$#1$};
}}

\renewcommand{\H}{{\tikzoverset{\rightharpoonup}{\EuScript{H}}}}

\providecommand{\Stufe}[2]{\left<\vphantom{A}\right.\!#1\!\left.\vphantom{A}\right>_{#2}}
%
%
% THEOREM Environments (Examples)-----------------------------------------
%
 \newtheorem{thm}{Theorem}[section]
 \newtheorem{cor}[thm]{Corollary}
 \newtheorem{lem}[thm]{Lemma}
 \newtheorem{prop}[thm]{Proposition}
 \theoremstyle{definition}
 \newtheorem{defn}[thm]{Definition}
 \theoremstyle{remark}
 \newtheorem{rem}[thm]{Remark}
 \newtheorem*{ex}{Example}
 \numberwithin{equation}{section}

\begin{document}

%\begin{tikzpicture}[remember picture, overlay]
 %\node [xshift=1cm,yshift=10cm,rotate=90] at (current page.south west)
 %{The final publication is available at http://link.springer.com, doi: \href{http://link.springer.com/article/10.1007/s00006-016-0727-1}{10.1007/s00006-016-0727-1}.
 %};
%\end{tikzpicture}

\title{Geometric Calculus of the Gauss Map}

\author{Peter \textsc{Lewintan}\footnote{peter.lewintan@uni-due.de, University of Duisburg-Essen, Germany}}

\date{October 5, 2016}

\maketitle

\begin{abstract}
In this paper we connect classical differential geometry with the concepts from geometric calculus. Moreover, we introduce and analyze a more general Laplacian for multivector-valued functions on manifolds. This allows us to formulate a higher codimensional analog of Jacobi's field equation.
\end{abstract}

%----------classification, keywords, date
\textit{AMS 2010 MSC:} {58Axx, 53A10, 30G35, 53Cxx}

\textit{Keywords:} {Clifford algebra, differential geometry, minimal surfaces, harmonic functions}

\section{Introduction}
\textit{``The study of $2$-dimensional minimal surfaces in Euclidean $(2 + p)$-space is greatly simplified by the fact, that the Gauss map is antiholomorphic. [...] a weaker property holds for higer dimensions; namely that the Gauss map of a minimal submanifold of arbitrary codimension is harmonic"}, cf. \cite{RV}.\\
In this paper we show that already the first result can be generalized to arbitrary dimensions and arbitrary codimensions, cf. corollary \ref{monogenicGauss}. For that purpose the view on the generalized Gauss map should be modified in such a way that geometric calculus can be readily applied to it, cf. section \ref{sec1Lew}.\\
First considerations go back to the monograph \cite{HS}. However, a strict connection to the classical theory is missing there; instead, the authors develop `new' geometrical concepts. For example, their `spur' is the mean curvature vector field, cf. our discussion in section \ref{sec3Lew}.\\
So, in this paper we catch up on connecting the classical differential geometry and geometric calculus explicitly.\\
Moreover, the second vector derivative operator is precisely examined with an emphasis on its grade preserving part. This operator is named the \textit{graded Laplace operator} and differs from other known Laplacians, cf. section \ref{sec2Lew}. Especially, the graded Laplacian of the Gauss map can be identified with the curl of the mean curvature vector field, cf. theorem \ref{bladeI}. This result reflects the theorem from \cite{RV}, but it will be
deduced from geometric calculus.\\
Furthermore, preliminary considerations for Bernstein type theorems, as in \cite{Fis} and \cite{Wang}, are presented in section \ref{sec3Lew}.\\
On the other hand, the fact that the Gauss map of a minimal submanifold has to be harmonic also plays an important role in Bernstein type theorems since they reduce to Liouville type theorems for harmonic maps, cf. \cite{HJW}, \cite{JX}, \cite{JXY}. From the point of view of geometric calculus, even a stronger condition is satisfied. Namely that minimal submanifolds of arbitrary dimension and arbitrary codimension can be characterized by monogenic Gauss map, cf. corollary \ref{monogenicGauss}.\\
However, an intrinsic point of view (including Clifford bundles associated with tangent bundles, covariant derivatives, Dirac operators \ldots cf. \cite{LM}) would not lead to the desired results, see  discussions in subsection \ref{LewIntrinsicGeo}.
\medskip

Firstly, we summarize our

\section{Used Notations from Geometric Algebra}
As usual we denote by $\bigwedge_*\mathds{R}^{N}$ the \textit{exterior algebra} over $\mathds{R}^{N}$ with the \textit{exterior product} $\wedge$. The following automorphisms of $\bigwedge_*\mathds{R}^{N}$ play an important role in multilinear algebra:
\begin{itemize}
 \item the \textit{reversion} (denoted by \ $(.)^\mathbf{t}$\ ),
 \item the \textit{grade involution} (denoted by \ $\widehat{(.)}$\ ).
\end{itemize}
For an $r$-vector $A_r\in\bigwedge_r\mathds{R}^{N}$, with $r\in\{0,\ldots,N\}$, yields
\begin{equation}
 {A_r}^\mathbf{t}=(-1)^\frac{r(r-1)}{2}A_r \qquad \text{and}\qquad \widehat{A_r}=(-1)^rA_r.
\end{equation}

In an extension of the usual Euclidean scalar product for vectors (denoted by\ $\skalarProd{}{}$\ ) one defines a \textit{scalar product} for multivectors (denoted by\ $*$\ ). This generates
\begin{itemize}
 \item an \textit{inner product} (denoted by\ $\Stufe{.,.}{}$\ ) via
 \begin{equation}
  \Stufe{A,B}{}\coloneqq A*B^\mathbf{t},
 \end{equation}
 \item a \textit{left} and a \textit{right contraction} (denoted by\ $\aus$\ and\ $\ausR$\ resp.) via
 \begin{subequations}
 \begin{align}
 (X\wedge A)*B&\eqqcolon X*(A\aus B) \quad \text{for all $\textstyle X\in\bigwedge_*\mathds{R}^{N}$}
 \shortintertext{and}
 A*(B\wedge X)  &\eqqcolon (A\ausR B)*X\quad \text{for all $\textstyle X\in\bigwedge_*\mathds{R}^{N}$ resp.,}
 \end{align}
 \end{subequations}
 for multivectors $A,B\in\bigwedge_*\mathds{R}^{N}$.
\end{itemize}
The inner product induces a (squared) \textit{norm} on $\bigwedge_*\mathds{R}^{N}$:
\begin{equation}
 |A|^2\coloneqq \Stufe{A,A}{}.
\end{equation}

One disadvantage of the above products is that they are not invertible. This can be partially overcome by an introduction of the \textit{geometric product}, also known as \textit{Clifford product}. The geometric product will be denoted by juxtaposition and is a more fundamental multiplication. In fact, the previous products can be deduced from the geometric product, cf. \cite{Dorst}.

Even further products can be obtained, e.g. the \textit{commutator product} (denoted by\ $\times$\ ):\vspace{-1ex}
\begin{equation}
A\times B\coloneqq \frac{1}{2}(AB-BA).
\end{equation}

Moreover, the geometric product of a vector $a\in\mathds{R}^{N}$ with a multivector $B\in\bigwedge_*\mathds{R}^{N}$ decomposes into
\begin{subequations}
\begin{equation}
 aB=a\aus B + a \wedge B,
\end{equation}
which in the case of vectors $a,b\in\mathds{R}^{N}$ becomes
\begin{equation}
 ab=\skalarProd{a}{b}+a\wedge b.
\end{equation}
\end{subequations}
The geometric product of a bivector $\text{\boldmath $B$}\in\bigwedge_2\mathds{R}^{N}$ with $A\in\bigwedge_*\mathds{R}^{N}$ takes the form\vspace{-1ex}
\begin{equation}
 \text{\boldmath $B$}A=\text{\boldmath $B$}\aus A + \text{\boldmath $B$}\times A + \text{\boldmath $B$}\wedge A.
\end{equation}

\begin{rem}For further reading and elaborated clarifications of this subject we refer the reader to \cite[ch 1]{Fed}, \cite[ch 1]{HS}, \cite{Dorst}, \cite[ch 1]{Brackx}, \cite{DFM}.
In fact, the above interior multiplications have not been investigated in \cite{HS}. Indeed, a different `inner product' was introduced by Hestenes, cf. \cite[(1-1.21)]{HS}. Let us denote the latter by $\bullet_{\text{H}}$. It is actually not positive definite since $$ (e_1\wedge e_2) \bullet_{\text{H}}(e_1\wedge e_2) =-1$$ and forces to include grade-based exceptions in practice,  but these problems can be fixed by using contractions, cf. discussion in \cite{Dorst}.
\end{rem}

%\begin{rem}
% Since the author's motivations come from the minimal surface theory, we will work on submanifolds. More precisely, we will consider submanifolds of Euclidean spaces. With minor modifications, these considerations can be generalized. However, we are not completely freed since the metric should be non-degenerate. Otherwise, division by vectors, a key ingredient in geometric algebra, would become problematic.
%\end{rem}

\section{The Unit Blades Manifold} \label{sec1Lew}

Let $\mathrm{i}=e_1\wedge\ldots\wedge e_{N}$ denote the unit pseudoscalar in $\bigwedge_*\mathds{R}^{N}$. The \textit{Hodge dual} of a multivector $A\in\bigwedge_*\mathds{R}^{N}$ is given by
\begin{equation}\label{HodgeDual}
 \star A \coloneqq {A}^{\mathbf{t}}\mathrm{i} = A^{\mathbf{t}}\aus \mathrm{i}.
\end{equation}

As usual we associate with a non-zero simple $m$-vector $a_1\wedge\ldots\wedge a_m$ the $m$-dimensional space $\mathcal{V}_m$ with frame $\{a_j\}_{j=1,\ldots,m}$. Hence,  $ v\in\mathcal{V}_m $ if and only if \ $ a_1\wedge\ldots\wedge a_m\wedge v = 0$.

The Hodge dual $\star(a_1\wedge\ldots\wedge a_m)$ represents the $k$-dimensional space, which is orthogonal to $\mathcal{V}_m$, where $k=N-m$.

Moreover, the orientation of $a_1\wedge\ldots\wedge a_m$ is transferred under the above association to an orientation of the subspace $\mathcal{V}_m$ so that $\mathcal{V}_m$ is a point in the \textit{oriented} Grassmann manifold \ $\widetilde{{\mathbf G}_m}(\mathds{R}^{N})$. \ Here, two $m$-blades (i.e. simple $m$-vectors) $A_m$ and $B_m$ are mapped onto the same (oriented) subspace if and only if ~ $A_m=\gamma B_m$ ~ for some \textit{positive} number $\gamma$.

Conversely, an (oriented) $m$-space can be mapped onto an $m$-vector via
\begin{equation}
 \mathcal{V}_m \text{ with frame } \{a_j\}_{j=1,\ldots,m} \longmapsto  a_1\wedge\ldots\wedge a_m.
\end{equation}

Indeed, a different choice of (ordered) basis shall give a different exterior product, but the two blades differ only by a (positive) scalar, namely the determinant of the basis change matrix.

Let $\big[\bigwedge_m \mathds{R}^{N} \big]$ denote the set of unit $m$-blades. Then the above mappings yield a bijection between unit $m$-blades and oriented $m$-subspaces.\\[-1ex]

Recall that the geometric algebra of $\mathds{R}^{N}$ is a metric space. So we can equip the subset of unit $m$-blades with the subspace topology. Using similar arguments as in \cite{Leicht} it follows that $\big[\bigwedge_m \mathds{R}^{N} \big]$ is locally homeomorphic to $\mathds{R}^{m\cdot k}$, where $k=N-m$, and can be endowed with a differentiable atlas. All in all,   $\big[\bigwedge_m \mathds{R}^{N} \big]$ is a differentiable manifold and will be called the \textbf{unit $m$-blades manifold}.

Furthermore, the Lie group $SO(N)$ acts transitively on $\big[\bigwedge_m \mathds{R}^{N} \big]$. For every point the isotropy group is $SO(m)\times SO(k)$ so that the unit $m$-blades manifold is diffeomorphic to the homogeneous space
\[
 \QR{SO(N)}{SO(m)\times SO(k)},
\]
the Grassmannian $\widetilde{{\mathbf G}_m}(\mathds{R}^{N})$. The advantage of considering the unit blades manifold instead of the homogeneous space is that we can operate with its points, namely blades, in the manner familiar to us from geometric calculus.\\[-1ex]

Grassmannians naturally appear in differential geometry:\\
In the following we will denote by $\Mfg$ an oriented $m$-dimensional smooth submanifold of the Euclidean vector space $\mathds{R}^{N}$. For any point $x\in\Mfg$, a parallel translation of the (oriented) tangent space $T_x\Mfg$ to the origin yields an (oriented) $m$-subspace of $\mathds{R}^{N}$ and, therefore, a point $\mathbf{g}(x)$ in the Grassmann manifold $\widetilde{{\mathbf G}_m}(\mathds{R}^{N})$.
\begin{defn}
 The mapping $x\mapsto \mathbf{g}(x)$ is called the \textbf{generalized Gauss map}.
\end{defn}

We have already seen how a unit $m$-blade $\I(x)$ can be assigned to $ \mathbf{g}(x)$. Hestenes calls the $m$-vector field $\I(.)$ the \textit{pseudoscalar of the manifold} $\Mfg$ as $\I(x)$ is the unit pseudoscalar in the geometric algebra of the tangent space $T_x\Mfg$. In geometric measure theory such a function is called an \textit{$m$-vector field orienting} $\Mfg$  or rather the \textit{orientation} of $\Mfg$, cf. \cite[4.1.7, 4.1.31]{Fed}, \cite[p. 132]{Simon}. To be more precise:\\ If $\tau_j=\tau_j(x)$, $j=1, \ldots, m$, is an orthonormal frame of $T_x\Mfg$, then
\begin{equation}
 \I(x)=\tau_1(x)\wedge\ldots\wedge \tau_m(x).
\end{equation}
\begin{defn}
 We call the mapping $x\mapsto\I(x)$ the \textbf{Gauss map}.
\end{defn}
 Summing up, the following diagram commutes:
\begin{equation}
\begin{tikzpicture}[>=stealth,anchor=base,baseline]
\node (a) at (180:3.14) {$\Mfg$};
\node (b) at (60:1.2) {$\widetilde{{\mathbf G}_m}(\mathds{R}^{N})$};
\node (c) at (-60:1.2)  {$\left[\bigwedge_m \mathds{R}^{N}\right]$};
\draw
(a) edge [->,out=40,in=180] node[above,sloped] {$\mathbf{g}$} (b)
    edge [->,out=-40,in=180] node[above,sloped] {$\I$} (c)
(b) edge [<->] node[right] {$\cong$} (c);
\end{tikzpicture}
\end{equation}
\begin{rem}
In what follows, the vectors $\tau_j$ will denote an orthonormal frame of the corresponding tangent space.
\end{rem}

\section{A Short Course on Geometric Calculus}\label{sec2Lew}
We start with careful investigations of multivector-valued functions on $\Mfg$, including new facts and terms, but also some basic results and definitions from \cite{HS} needed to make the paper self-contained since \textit{``the derived products in \cite{HS} have not quite been chosen properly"}, cf. the discussion in \cite{Dorst} where the problems were fixed.\vspace{-1ex}

\subsection{The First Derivative}
Let $a=a(x)$ be a tangent vector, i.e. $a\wedge\I=0$. The \textbf{directional derivative} of a function $F:\Mfg\to\bigwedge_*\mathds{R}^{N}$ in the direction $a$ is given by the usual limit:
\begin{equation}
 (\skalarProd{a}{\partial})F=\left(\skalarProd{a(x)}{\partial_x}\right)F(x)\coloneqq \lim_{t\to 0}\frac{F(x(t))-F(x(0))}{t}
\end{equation}
if it exists, where $x(t)\subset\Mfg$ with $x(0)=x$ and $x'(0)=a$.
\smallskip

\paragraph{General agreement:} All directional derivatives shall exist in the following.
\smallskip

\noindent Hestenes defines the \textbf{vector derivative} of a function $F:\Mfg\to\bigwedge_*\mathds{R}^{N}$  by
\begin{equation}
 \partial F\coloneqq \sum_{j=1}^m\tau_j\underset{\mathclap{\substack{\\ \\ \uparrow\\ \text{geometric} \\ \text{product} }}}{\ }(\skalarProd{\tau_j}{\partial})F.
 \end{equation}
For notational simplicity, the dependence on $x$ is here and will be frequently suppressed. The derivative operator is
\begin{equation}
\partial = \partial_x =\sum_{j=1}^m \tau_j(\skalarProd{\tau_j}{\partial}) =\sum_{j=1}^m \tau_j(x)(\skalarProd{\tau_j(x)}{\partial_x}).
 \end{equation}
The vector derivative does not depend on the choice of basis vectors and it can also be introduced in a coordinate-free manner, cf. \cite[sec. 7-2]{HS} \footnote{In a similar manner Pompeiu introduced his \textit{areolar derivative} already in 1912, cf. discussion in \cite[sec. 7.2.4]{GHT}.}. Furthermore, $\partial$ has the algebraic properties of a vector field, cf. \cite[sec. 4-1]{HS}:
\begin{equation}
 \P(\partial) = \partial \qquad \text{or}\qquad \I\wedge\partial = 0
 \end{equation}
where $\P=\P_\I$ denotes the projection onto $\I$. More precisely, it reads as $\P(\dot{\partial})=\dot{\partial}$ or $\I\wedge\dot{\partial}=0$ since $\partial$ does not operate on the projection or the pseudoscalar here.

As a matter of fact, it would be better to call $\partial F$ the \textit{left} vector derivative because the $\tau_j$'s are multiplied from the left side. Since the geometric product is not commutative, we should also consider the \textit{right} vector derivative\vspace{-1ex}
\begin{equation}
  \dot{F}\dot{\partial} \coloneqq \sum_{j=1}^m \Big((\skalarProd{\tau_j}{\partial})F\Big)\underset{\mathclap{\substack{\\[0.5ex] \uparrow\\ \text{geometric} \\ \text{product} }}}{\ }\tau_j.
 \end{equation}

\begin{rem}
 The major difference between the common Dirac operators and the vector derivative is that the latter also contains normal components. This is important for the considerations in section \ref{sec3Lew}.
\end{rem}

Seeing that $\partial$ behaves like a vector, the geometric product in $\partial F$ can be decomposed into a left contraction and a wedge product, cf. \cite[(2-1.21a)]{HS}:
\begin{equation}
 \partial F= \underset{\text{divergence}}{\partial\aus F}+ \ \ \underset{\text{curl}}{\partial\wedge F}
\end{equation}
\noindent where the above terms are chosen in accord with the standard nomenclature. We will not focus on the equivalent terms coming from $F\partial$, cf. rem. \ref{LewLeftRight}.\\

Functions with vanishing derivative operators are particularly interesting. We start with
\begin{defn}
Let $F:\Mfg \to \bigwedge_* \mathds{R}^{N}$.
\begin{itemize}
 \item[-] $F$ is called \textbf{divergence-free} iff $\partial\aus F\equiv 0$ on $\Mfg$.
 \item[-] $F$ is called \textbf{curl-free} iff $\partial\wedge F \equiv 0$ on $\Mfg$.
\end{itemize}
\end{defn}

\begin{rem}\label{LewLeftRight}
It is sufficient to consider the left operations here because transformations including the \textit{grade involution} yield:
 \begin{equation}
  \partial \aus F = \widehat{F \ausR \partial} \qquad  \text{and} \qquad  \partial \wedge F = -\widehat{F \wedge \partial}.
 \end{equation}
\end{rem}

\begin{ex}
The identity map $F(x)=x$ is curl-free, and for its divergence we obtain: \ $\skalarProd{\partial}{x}=m$.
\end{ex}

From \cite{Brackx} we adopt the
\begin{defn}
 Let $F:\Mfg \to \bigwedge_* \mathds{R}^{N}$.
\begin{itemize}
 \item[-] $F$ is called \textbf{left monogenic} iff $\partial F\equiv 0$ on $\Mfg$.
 \item[-] $F$ is called \textbf{right monogenic} iff $ F\partial \equiv 0$ on $\Mfg$.
 \item[-] $F$ is called \textbf{monogenic} iff $\partial F\equiv 0$  and $ F\partial \equiv 0$ on $\Mfg$.
\end{itemize}
\end{defn}

\begin{rem}\label{equivMonog}
For an $r$-vector-valued function $\text{\boldmath $F$}_r:\Mfg \to\bigwedge_r\mathds{R}^{N}$, ~ with  $r\in\{0,\ldots,N\}$, the both monogenicity terms are equivalent, but in general, left monogenicity does not imply right monogenicity, cf. the following
\end{rem}

\begin{rem}
 Left monogenic functions generalize the concept of \textit{holomorphic} functions to arbitrary dimensions and arbitrary manifolds.  Similarly, \textit{anti-holomorphic} functions are generalized by right monogenic ones.
\end{rem}

Transformations including the grade involution prove

\begin{prop}\label{PropMonogenicLew}
 $F$ is divergence- and curl-free if and only if $F$ is monogenic.
\end{prop}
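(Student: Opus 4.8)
The plan is to avoid any naive attempt to peel off divergence and curl from $\partial F$ by grades — this fails as soon as $F$ has mixed grades, since the grade-$(r{+}1)$ part of $\partial F$ receives contributions from both $\partial\wedge\text{\boldmath $F$}_r$ and $\partial\aus\text{\boldmath $F$}_{r+2}$ — and instead to play the two monogenicity conditions $\partial F\equiv 0$ and $F\partial\equiv 0$ against each other. The grade-involution identities of Remark~\ref{LewLeftRight} are exactly the tool that relates the right derivative to the left one, and once both conditions are rewritten in terms of $\partial\aus F$ and $\partial\wedge F$ alone, an addition/subtraction trick isolates them.

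Concretely, I would first record the two vector-type decompositions $\partial F=\partial\aus F+\partial\wedge F$ (already given) and, since $\partial$ has the algebraic properties of a vector field, the mirror decomposition $F\partial=F\ausR\partial+F\wedge\partial$. Applying the grade involution to the identities $\partial\aus F=\widehat{F\ausR\partial}$ and $\partial\wedge F=-\widehat{F\wedge\partial}$ of Remark~\ref{LewLeftRight}, and using that $\widehat{(.)}$ is an involutive automorphism, gives $F\ausR\partial=\widehat{\partial\aus F}$ and $F\wedge\partial=-\widehat{\partial\wedge F}$; by linearity of $\widehat{(.)}$ these combine into the key identity
\[
 F\partial=\widehat{\partial\aus F}-\widehat{\partial\wedge F}=\widehat{\,\partial\aus F-\partial\wedge F\,}.
\]

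With this in hand both implications are immediate. If $F$ is divergence- and curl-free, then $\partial F=\partial\aus F+\partial\wedge F\equiv 0$ and, by the displayed identity, $F\partial=\widehat{0}\equiv 0$, so $F$ is monogenic. Conversely, if $F$ is monogenic then $\partial\aus F+\partial\wedge F\equiv 0$ from $\partial F\equiv 0$, while $F\partial\equiv 0$ together with the injectivity of $\widehat{(.)}$ forces $\partial\aus F-\partial\wedge F\equiv 0$; adding and subtracting these two relations yields $2\,\partial\aus F\equiv 0$ and $2\,\partial\wedge F\equiv 0$, i.e. $F$ is divergence- and curl-free. I do not expect a genuine obstacle here: the whole argument is an algebraic manipulation of the identities already established, and the only conceptual point to flag is precisely that the second monogenicity condition $F\partial\equiv 0$ is indispensable — grade-counting alone does not separate divergence from curl for mixed-grade $F$.
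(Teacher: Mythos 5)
Your argument is correct and is precisely the route the paper intends: its one-line proof ``Transformations including the grade involution prove'' refers exactly to combining the identities of Remark~\ref{LewLeftRight} into $F\partial=\widehat{\partial\aus F-\partial\wedge F}$ and playing the two monogenicity conditions against each other. You have simply written out the details the paper leaves implicit, including the correct observation that both conditions $\partial F\equiv 0$ and $F\partial\equiv 0$ are needed for mixed-grade $F$.
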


\begin{rem}
Polynomials are in general not monogenic, cf. $\partial x = m$.
\end{rem}

\subsection{The Second Derivative}
The second derivative of $F$ decomposes in the following ways:
\begin{equation}
\begin{split}
 \partial^2 F &= \partial (\partial F) =\partial\aus(\partial\aus F) +
\underbrace{\partial\aus(\partial\wedge F ) +
\partial \wedge (\partial \aus F)}_{\text{same grade(s) as $F$}} +  \partial \wedge
(\partial \wedge F)  \\
&= (\partial\partial)F = (\partial \wedge \partial)\aus F +
\overbrace{(\skalarProd{\partial}{\partial}) F +
(\partial \wedge \partial)\times F } + (\partial \wedge \partial) \wedge F .
\end{split}
\end{equation}
Although ~ $\partial$ ~ is regarded as a vector, ~ $\partial\wedge\partial$ ~ does not vanish in general, cf.\linebreak \cite[sec. 4-1]{HS} and it has algebraic properties of a bivector.\\ Recall that $(\skalarProd{\partial}{\partial})$ behaves like a scalar and that the commutator product with a bivector is grade preserving. Hence, \ $(\skalarProd{\partial}{\partial})(.) +(\partial \wedge \partial) \times(.)$ \ acts in a graded linear way, namely
\begin{equation}
 (\skalarProd{\partial}{\partial})\text{\boldmath $F$}_r+(\partial \wedge \partial) \times \text{\boldmath $F$}_r = \Stufe{(\skalarProd{\partial}{\partial})\text{\boldmath $F$}_r+(\partial \wedge \partial) \times \text{\boldmath $F$}_r}{r}.
\end{equation}
This fact justifies the new
\begin{defn}
Let $F:\Mfg \to \bigwedge_* \mathds{R}^{N}$. The \textbf{graded Laplace operator} of $F$ on $\Mfg$ is given by\vspace{-1ex}
 \begin{align}
   \diamondsuit F \coloneqq&\ \partial\aus(\partial\wedge  F) + \partial \wedge (\partial \aus F)\tag{4.10-i}\\
	=&\ (\skalarProd{\partial}{\partial}) F + (\partial \wedge \partial)\times F. \tag{4.10-ii}
 \end{align}
\end{defn}
\setcounter{equation}{10}
The symbol $\diamondsuit$ should remind that we are dealing with the grade preserving part of the second derivative:
\begin{equation}
 \Stufe{\partial^2 \text{\boldmath $F$}_r}{r}=\diamondsuit \text{\boldmath $F$}_r
\end{equation}
by pushing together $\Stufe{\ }{}$.
\begin{rem}
As in the case of the vector derivative, the presence of the normal components is relevant. Disregarding them, we would arrive at the Hodge-de Rham-Laplace operator, but then our theorem \ref{bladeI} will not be obtained (cf. subsection \ref{LewIntrinsicGeo}).
\end{rem}

Transformations including grade involution show that it suffices to look at the left operations here. Moreover, the graded Laplacian is given by the arithmetic mean of the second left and second right derivatives:
\begin{equation}\tag{4.10-iii}
 \diamondsuit F = \frac{1}{2}\left(\partial^2 F + F\partial^2\right).
\end{equation}

 Furthermore, the `scalar part' of $\diamondsuit$ reads in \textit{Riemannian normal coordinates} as\vspace{-1ex}
\[
 \skalarProd{\partial}{\partial} = \sum_{j=1}^m (\skalarProd{\tau_j}{\partial})\, (\skalarProd{\tau_j}{\partial}).
\]
This operator is well known in classical differential geometry as the \textbf{Laplace-Beltrami operator} $\Delta_\Mfg$ on $\Mfg$, cf. \cite[p. 163]{DHT2} or \cite[prop. 1.2.1]{Sms}.

It follows immediately that for a \textit{scalar} function $\varphi:\Mfg\to\mathds{R}$ the two Laplacians are the same:
\begin{equation}
  \diamondsuit \varphi = \Delta_\Mfg \varphi.
\end{equation}

Recall that a real function $\varphi$ is called \textit{harmonic} iff $\Delta_\Mfg \varphi\equiv 0$ on $\Mfg$. Hence, functions with vanishing graded Laplace operator are also interesting:
\begin{defn}
Let $F:\Mfg \to \bigwedge_* \mathds{R}^{N}$. We call $F$ to be \textbf{graded-harmonic} on $\Mfg$ iff $\diamondsuit F \equiv 0$ on $\Mfg$.
\end{defn}

Proposition \ref{PropMonogenicLew} and the above expressions of the graded Laplacian verify

\begin{prop}
 Monogenic functions are graded-harmonic.
\end{prop}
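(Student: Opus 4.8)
The plan is to deduce the statement directly from Proposition~\ref{PropMonogenicLew} together with the expression (4.10-i) of the graded Laplacian. Let $F:\Mfg\to\bigwedge_*\mathds{R}^{N}$ be monogenic, so that $\partial F\equiv 0$ and $F\partial\equiv 0$ on $\Mfg$. By Proposition~\ref{PropMonogenicLew} this is equivalent to $F$ being simultaneously divergence-free and curl-free, i.e.
\[
 \partial\aus F\equiv 0 \qquad\text{and}\qquad \partial\wedge F\equiv 0 \quad\text{on }\Mfg .
\]

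Next I would simply substitute these two identities into
\[
 \diamondsuit F = \partial\aus(\partial\wedge F) + \partial\wedge(\partial\aus F).
\]
The first summand vanishes because $\partial\wedge F\equiv 0$, and the second summand vanishes because $\partial\aus F\equiv 0$; hence $\diamondsuit F\equiv 0$ on $\Mfg$, which is exactly the assertion that $F$ is graded-harmonic. Alternatively one may argue from (4.10-iii): since $\partial F\equiv 0$ one has $\partial^2 F=\partial(\partial F)\equiv 0$, and since $F\partial\equiv 0$ one has $F\partial^2=(F\partial)\partial\equiv 0$, so that $\diamondsuit F=\frac{1}{2}(\partial^2 F+F\partial^2)\equiv 0$.

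I do not expect any genuine obstacle here: the result is a formal corollary of the two quoted facts. The only point that needs a moment's care is notational, namely that in the second variant $\partial^2$ must be read in the iterated form $\partial(\partial\,\cdot\,)$ (and $F\partial^2$ as $(F\partial)\partial$); by the displayed decomposition of the second derivative this agrees with the product form $(\partial\partial)(\,\cdot\,)$, so both expressions legitimately compute $\diamondsuit F$ and the conclusion is unambiguous.
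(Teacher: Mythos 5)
Your argument is correct and is essentially the paper's own: the paper likewise invokes Proposition~\ref{PropMonogenicLew} together with the displayed expressions for $\diamondsuit$ (in particular (4.10-i), or equivalently (4.10-iii)) to conclude that $\diamondsuit F\equiv 0$. Nothing further is needed.
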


\begin{rem}
 The converse is not true in general: \hfill $ \partial x = m \hfill \text{and} \hfill \diamondsuit x = 0. $
\end{rem}

\begin{rem}
 Only left monogenicity or only right monogenicity is, in general, not sufficient for the vanishing of the graded Laplacian. However, it is sufficient in the case of $r$-vector-valued functions, cf. rem. \ref{equivMonog}.
\end{rem}

\begin{rem}\label{notHarm}
 In general, harmonic and graded-harmonic multivector-valued functions are distinct. Indeed, the target space of a multivector-valued function is the space $\bigwedge_* \mathds{R}^{N}$. Since this space is flat, its Christoffel symbols vanish. Consequently, a harmonic function $F:\Mfg\to\bigwedge_* \mathds{R}^{N}$, in the sense of Eells and Sampson \cite{ES}, will be a solution of $\Delta_\Mfg F \equiv 0$, the Euler-Lagrange equation of a certain energy.

 However, we consider a more general operator. We have
 \begin{equation}
  \diamondsuit F = \Delta_\Mfg F + (\partial\wedge\partial)\times F \tag{4.10-iv}
 \end{equation}
 hence the solutions of $\diamondsuit F \equiv 0$ deserve our special attention.
\end{rem}

\subsection{The Shape Operator}\label{shape}
In \cite[sec. 4-2]{HS} Hestenes and Sobczyk introduce and examine the \textit{shape} of a function $F:\Mfg \to\bigwedge_* \mathds{R}^{N}$. The \textbf{shape operator} of $F$ on $\Mfg$ is given, cf. \cite[(4-2.14)]{HS}, by
\[
\S(F) \coloneqq \dot{\partial}\dot{\P}(F)
\]
where the projection operator is being differentiated. Setting $\S_a\coloneqq\dot{\partial}\wedge\dot{\P}(a)$, the shape of a vector-valued function $a=a(x)$ can be expressed by
\[
\S(a) = \S_a + \skalarProd{\H}{a}
\]
with a \textit{normal} vector field $\H=\H(x)$, cf. \cite[(4-2.18)]{HS}. For `the spur' $\H$, we use a notation different from \cite{HS} since it turned out to be a well-known invariant in differential geometry:

Let $a$ and $b$ be vector fields on $\Mfg$, then $a\aus\S_b=b\aus\S_a$ and the normal part of the directional derivative of $b$ in the direction $a$ can be expressed by
\[
 \big( (\skalarProd{a}{\partial})b\big)^\perp = a\aus \S_b,
\]
cf. \cite[(4-3.16)]{HS}. All in all, ~ $(.)\aus\S(.)$ ~ is a symmetric bilinear form on $T_x\Mfg$ with values in
the normal space $N_x\Mfg$, namely the \textbf{second fundamental form} of $\Mfg$ at $x$, cf. \cite[prop. 2.2.2]{Sms}.\\[-1ex]

Writing out the expression \cite[(4-2.20)]{HS} in coordinates we arrive at
 \begin{equation}\label{mcv1}
  \H = \partial_a\aus \S_a = \textstyle\sum\limits_{j=1}^m \tau_j \aus \S_{\tau_j},
 \end{equation}
hence $\H=\H(x)$ is the trace of the second fundamental form at $x$ and, by definition, the \textbf{mean curvature vector} at $x$ on $\Mfg$, cf. \cite[p. 68]{Sms}, \cite[p. 301]{DHT1} or \cite[p. 158]{DHT2}. Note that we have introduced it without the factor $\frac{1}{m}$, and the `harpoon' emphasizes that we are dealing with a vector.\bigskip

Straightforward algebraic calculations show the important relation of the shape to the second derivative operator, cf. \cite[sec. 4-3]{HS}:
\[
 \partial \wedge \partial = \S(\partial).
\]
Finally, we arrive at an additional expression for the graded Laplacian:
\begin{equation}
 \diamondsuit F = \Delta_\Mfg F + \S(\dot{\partial})\times\dot{F}. \tag{4.10-v}
\end{equation}

Furthermore, the graded Laplacian differs from the second derivative. For a \textit{scalar} function $\varphi:\Mfg\to\mathds{R}$, we already know:
\begin{equation}
  \partial^2\varphi = \diamondsuit \varphi + \partial\wedge\partial \varphi = \diamondsuit \varphi + \S(\partial\varphi).
\end{equation}

\begin{ex}
 Let us have a short look at the identity map $F(x)=x$ on $\Mfg$:\\
 Since $\skalarProd{\partial}{x}=m$ and $\partial\wedge x = 0$, we obtain
 \[
  \partial x = m, \quad \text{and besides}\quad \partial^2x=0 \ \text{~ and ~} \ \diamondsuit x = 0.
 \]
 Moreover,
 \begin{equation}
\begin{split}
   (\partial \wedge\partial)x &= \S(\dot{\partial})\dot{x} = (\skalarProd{\partial_a}{\dot{\partial}_x})(\S_a\ausR \dot{x} + \S_a \wedge \dot{x})=\\
 &= -\partial_a\aus\S_a  +   \partial_a\wedge\S_a= -\H
\end{split}
 \end{equation}
as $\partial_a\wedge\S_a\equiv0$, cf. \cite[(4-2.23)]{HS}. Therefore,
\begin{equation}
 \Delta_\Mfg x =(\skalarProd{\partial}{\partial})x = \partial^2 x -(\partial\wedge \partial)x = \H.
\end{equation}
Admittedly, we have derived a classical result, cf. \cite[thm. 2.1]{Oss} or \cite[p. 305]{DHT1}:

Denoting the standard basis of $\mathds{R}^{N}$ by $\{e_1, \ldots, e_{N}\}$, we obtain
\[
 \diamondsuit\Stufe{x, e_i}{} = \Delta_\Mfg \Stufe{x, e_i}{}= (\skalarProd{\partial}{\partial}) \Stufe{x, e_i}{}
= \Stufe{(\skalarProd{\partial}{\partial}) x, e_i}{}= \Stufe{\H, e_i}{}.
\]
This example shows that the graded Laplacian does \textit{not} act coordinate-wise in contrast to the Laplace-Beltrami-operator (cf. also rem. \ref{notHarm}):
\[
0 = \diamondsuit x = \sum_{i=1}^{N} \diamondsuit \left( \Stufe{x, e_i}{} e_i\right) \neq \underbrace{\sum_{i=1}^{N} \left( \diamondsuit\Stufe{x, e_i}{} \right) e_i}_{=~(\skalarProd{\partial\;\!}{\!\;\partial})x~ =~
\Delta_\Mfg x} = \H.
\]
Algebraically, it is clear that $\Delta_\Mfg =(\skalarProd{\partial}{\partial}) $ acts coordinate-wise since it behaves like a scalar factor. To sum up, we have:
\[ \diamondsuit x = 0 \qquad \text{while} \qquad \Delta_\Mfg x = \H.\]
\end{ex}

\paragraph{Conclusion}
We have introduced the new graded Laplace operator $\diamondsuit$ as grade preserving part of the second derivative operator $\partial^2$ on $\Mfg$. For scalar-valued functions, $\diamondsuit$ coincides with the Laplace-Beltrami operator $\Delta_\Mfg$, but in general, the graded Laplacian contains much more information. We will explicitly see the difference between these operators in \ref{Norm2FF} and \ref{bladeNormlvf} while computing the graded Laplacian of the Gauss map.

\section{Geometric Calculus of the Gauss Map}\label{sec3Lew}
In \cite{HS} Hestenes and Sobczyk examined the Gauss map $\I$, called by them the pseudoscalar of the manifold, intensively and gave many different expressions for its derivatives. However, a clear geometric interpretation is missing in their monograph. This may be explained by the fact that they were not aware of the close relation to differential geometry and that was especially expressed by the missing discovery of the mean curvature vector in their relations. Indeed, they rediscovered it as `the spur' and realized its geometric importance \footnote{For example, it appears in the divergence theorem, cf. \cite[sec. 7-3]{HS} and \cite[sec. 3-5]{DHT2}.} :\\
\textit{``The spur}
\begin{itemize}
 \item[-] \textit{is everywhere orthogonal to tangent vectors of the manifold, which fits nicely with the connotation of `spur' as something that `sticks out' (of the manifold)"}, cf.  \cite[p. 151]{HS}
 \item[-] \textit{tells us some fundamental things about the manifold"}, cf. \cite[p. 164]{HS}
 \item[-] \textit{is the appropriate generalization of mean curvature to arbitrary vector manifolds"}, cf. \cite[p. 197]{HS}
\end{itemize}
In a later survey Hestenes mentioned:

\textit{``As far as I know, the spur was not identified as significant geometrical concept until it was first formulated in G[eometric] C[alculus]."} cf. \cite{Hes}
\medskip

\noindent Recognizing the mean curvature \textit{vector}, we will now catch up the differential-geometric meaning of the algebraic equations from \cite[sec. 4-4]{HS}.

\subsection{The First Derivative of the Gauss Map}
In \cite[(4-4.7)]{HS} Hestenes obtained
\begin{thm}\label{diffGaussmap}
 \quad $ ~ \partial\I = - \H\I$.
\end{thm}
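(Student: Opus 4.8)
The plan is to compute $\partial\I$ directly from the frame representation $\I=\tau_1\wedge\ldots\wedge\tau_m$, using the Leibniz rule, the algebra of the left contraction, and the two properties of the shape operator recalled in Section~\ref{shape}: that $\big((\skalarProd{\tau_i}{\partial})\tau_j\big)^{\perp}=\tau_i\aus\S_{\tau_j}$ is a \emph{normal} vector field, \emph{symmetric} in $i,j$, and that $\H=\sum_{j}\tau_j\aus\S_{\tau_j}$ by \eqref{mcv1}. Throughout write $b_{ij}\coloneqq\tau_i\aus\S_{\tau_j}$.

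First I would differentiate the blade. From $\partial\I=\sum_{i}\tau_i(\skalarProd{\tau_i}{\partial})\I$ and the Leibniz rule for $\wedge$,
\[
 (\skalarProd{\tau_i}{\partial})\I=\sum_{j=1}^{m}\tau_1\wedge\ldots\wedge\big[(\skalarProd{\tau_i}{\partial})\tau_j\big]\wedge\ldots\wedge\tau_m ,
\]
the bracket occupying the $j$-th slot. Splitting $(\skalarProd{\tau_i}{\partial})\tau_j$ into tangential and normal parts, the tangential part $\sum_k\omega_{ijk}\tau_k$ has $\omega_{ijk}=-\omega_{ikj}$ (differentiate $\skalarProd{\tau_j}{\tau_k}=\delta_{jk}$), so inserted into slot $j$ it contributes nothing: the terms $k\neq j$ vanish because $\tau_k$ is already present, and the $k=j$ term has coefficient $\omega_{ijj}=0$. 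Since replacing $\tau_j$ in $\I$ by a vector $v$ produces $v\wedge(\tau_j\aus\I)$ (move $v$ to the front), this leaves the compact formula
\[
 (\skalarProd{\tau_i}{\partial})\I=\sum_{j=1}^{m} b_{ij}\wedge(\tau_j\aus\I).
\]
(One may instead simply evaluate at the centre of Riemannian normal coordinates, where the tangential part vanishes outright.)

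Then I would left-multiply by $\tau_i$ and sum. For each $m$-blade $C=b_{ij}\wedge(\tau_j\aus\I)$ one has $\tau_i C=\tau_i\aus C+\tau_i\wedge C$. The antiderivation rule for $\aus$ together with $\skalarProd{\tau_i}{b_{ij}}=0$ gives $\tau_i\aus C=-\,b_{ij}\wedge\big((\tau_i\wedge\tau_j)\aus\I\big)$, and summing over $i,j$ this vanishes because $b_{ij}$ is symmetric whereas $\tau_i\wedge\tau_j$ is antisymmetric in $(i,j)$. On the other hand $\tau_i\wedge C=-\,b_{ij}\wedge\big(\tau_i\wedge(\tau_j\aus\I)\big)$, and for tangent vectors $\tau_i\wedge(\tau_j\aus\I)=\delta_{ij}\,\I$, so the double sum collapses to $-\sum_{i}b_{ii}\wedge\I=-\H\wedge\I$. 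Finally $\H\aus\I=0$ because $\H$ is normal, hence $\H\wedge\I=\H\I$ and $\partial\I=-\H\I$. In passing this records the grade decomposition $\partial\aus\I=0$ (so the Gauss map is divergence-free) and $\partial\wedge\I=-\H\wedge\I$.

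I expect the difficulty to be purely organisational rather than conceptual. The points that need attention are: that $(\skalarProd{\tau_i}{\partial})$ differentiates $\I$ only, not the $\tau_i$ standing in front of it; the permutation signs incurred when moving vectors into and out of the blade $\I$; and the realisation that the tangential (Levi-Civita) part of $(\skalarProd{\tau_i}{\partial})\tau_j$ disappears while the \emph{symmetry} of the second fundamental form is exactly what reduces the double sum over the frame to the single trace defining $\H$.
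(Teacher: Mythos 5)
Your proof is correct. Note first that the paper itself offers no proof of Theorem \ref{diffGaussmap}: it simply imports the identity from \cite[(4-4.7)]{HS}, where it is obtained inside the shape-operator formalism — essentially from $(\skalarProd{a}{\partial})\I=\S_a\times\I=\S_a\I$ (the shape bivector anticommuting with $\I$) together with $\partial_a\wedge\S_a\equiv0$ and $\partial_a\aus\S_a=\H$. Your route is a direct frame computation: Leibniz on the blade, elimination of the tangential (Levi-Civita) part via the antisymmetry $\omega_{ijk}=-\omega_{ikj}$, and then the split $\tau_iC=\tau_i\aus C+\tau_i\wedge C$, where the symmetry of the second fundamental form kills the grade-$(m-1)$ part and the trace produces $\H$ in the grade-$(m+1)$ part. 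All the individual steps check out: the identity $v\wedge(\tau_j\aus\I)=\tau_1\wedge\ldots\wedge v\wedge\ldots\wedge\tau_m$, the antiderivation rule with $\skalarProd{\tau_i}{b_{ij}}=0$, the contraction identity $\tau_i\wedge(\tau_j\aus\I)=\delta_{ij}\I$ for tangent vectors, and $\H\I=\H\wedge\I$ because $\H$ is normal. What your version buys is self-containedness and the explicit byproducts $\partial\aus\I\equiv0$ and $\partial\wedge\I=-\H\wedge\I$, which the paper uses later (in the proof of Lemma \ref{divMKV} and in the remark on the decompositions of $\partial\I$); what the Hestenes--Sobczyk route buys is coordinate-freeness and the single-direction formula $(\skalarProd{a}{\partial})\I=\S_a\I$, which the paper also needs in Section \ref{Norm2FF}. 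The only point worth making explicit in a write-up is that $b_{ij}=\tau_i\aus\S_{\tau_j}$ is tensorial (independent of how the frame is extended off $x$), so that its symmetry — recalled in Section \ref{shape} as $a\aus\S_b=b\aus\S_a$ — can legitimately be invoked; you do flag this implicitly via the normal-coordinates alternative.
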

Recall the
\begin{defn}
 $\Mfg\subset\mathds{R}^{N}$ is called \textbf{minimal submanifold} iff $\H\equiv0$ on $\Mfg$.
\end{defn}
So, an interpretation of theorem \ref{diffGaussmap} yields the beautiful new
\begin{cor}\label{monogenicGauss}
 $\Mfg\subset\mathds{R}^{N}$ is a minimal submanifold if and only if its Gauss map is monogenic.
\end{cor}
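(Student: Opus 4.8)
The plan is to deduce the corollary immediately from Theorem~\ref{diffGaussmap}, using only that the Gauss map $\I$ takes values in unit $m$-blades. By definition $\Mfg$ is minimal exactly when $\H\equiv 0$, so the whole statement collapses to the claim that $\partial\I\equiv 0$ on $\Mfg$ if and only if $\H\equiv 0$ on $\Mfg$, together with the remark that for $\I$ left monogenicity is the same as monogenicity.

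First I would invoke Theorem~\ref{diffGaussmap} to write $\partial\I=-\H\I$ pointwise. Now $\H(x)$ is a \emph{normal} vector and $\I(x)$ is the unit pseudoscalar of $T_x\Mfg$; since $\H(x)$ is orthogonal to every tangent vector, the left contraction $\H\aus\I$ vanishes, so $\H\I=\H\wedge\I$. Moreover $\H(x)$ is orthogonal to the whole span of $\I(x)$, so this wedge is again a blade, of magnitude $|\H(x)|\,|\I(x)|=|\H(x)|$. Hence $\partial\I(x)=0$ iff $|\H(x)|=0$ iff $\H(x)=0$; letting $x$ range over $\Mfg$ shows $\I$ is left monogenic iff $\Mfg$ is minimal. (Equivalently: $\I$ is invertible with $\I^{-1}=\I^{\mathbf t}$ because $\I\I^{\mathbf t}=|\I|^2=1$, so $\H\I=0$ can be multiplied on the right by $\I^{\mathbf t}$ to give $\H=0$.)

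It remains to pass from left monogenicity to two-sided monogenicity. For this I would cite Remark~\ref{equivMonog}: $\I$ is $m$-vector-valued, so its left and right monogenicity coincide, and $\partial\I\equiv 0$ therefore already entails $\I\partial\equiv 0$. Alternatively one can argue directly from the grade-involution identities behind Remark~\ref{LewLeftRight}, which on a homogeneous multivector field relate $\I\partial$ to $\partial\I$ up to a sign. Combining the two paragraphs yields the asserted equivalence between minimality of $\Mfg$ and monogenicity of $\I$, i.e. Corollary~\ref{monogenicGauss}.

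There is essentially no obstacle once Theorem~\ref{diffGaussmap} is in hand; the only steps that call for a line of justification are the invertibility of the blade field $\I$ (equivalently the magnitude identity $|\H\wedge\I|=|\H|$, which uses that $\H$ is normal), and the transition from one-sided to two-sided monogenicity, which rests on the homogeneity of the Gauss map.
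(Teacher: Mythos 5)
Your proposal is correct and follows essentially the same route as the paper, which presents the corollary as a direct interpretation of Theorem~\ref{diffGaussmap} combined with the observation (Remark~\ref{equivMonog}) that left and right monogenicity coincide for the $m$-vector-valued map $\I$. The only detail you add beyond the paper's one-line deduction is the explicit justification that $\H\I=0$ forces $\H=0$ via the invertibility of the unit blade $\I$, which is a correct and welcome filling-in of an implicit step.
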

\begin{rem}
It is known that $2$-dimensional minimal surfaces can be characterized by (anti-)holomorphic generalized Gauss map, cf. \cite{Chern}, \cite{HO}, \cite{Fuji}. Recall that left monogenicity generalizes holomorphy and right monogenicity antiholomorphy. Since the Gauss map is an $m$-vector field, the monogenicity terms are equivalent and the above corollary is an adequate statement in all dimensions and all codimensions.
\end{rem}
\begin{rem}
Theorem \ref{diffGaussmap} leads to a coordinate-free expression of $\H$, cf. \cite[(4-4.6)]{HS}, which -- rewritten in local coordinates -- yields the classical expression for the mean curvature vector field, cf. \eqref{mcv1} above and \cite[p. 68]{Sms}.
\end{rem}

\subsection{Divergence and Curl of the Mean Curvature Vector Field}\label{DivRotH}
\begin{lem}\label{divMKV} Let $X=X^{\perp}$ be a normal vector field on $\Mfg$. Then
\begin{equation*}
 \skalarProd{\partial}{X}+\skalarProd{\H}{X}\equiv0.
\end{equation*}
\end{lem}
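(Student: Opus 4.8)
The plan is to compute the scalar (divergence) part $\skalarProd{\partial}{X}$ directly from the definition of the vector derivative and to split each directional derivative $(\skalarProd{\tau_j}{\partial})X$ into its tangential and normal components, using the structure of the shape operator developed in subsection \ref{shape}. Writing $\partial = \sum_{j=1}^m \tau_j(\skalarProd{\tau_j}{\partial})$ we have
\[
 \skalarProd{\partial}{X} = \sum_{j=1}^m \tau_j \aus \big((\skalarProd{\tau_j}{\partial})X\big).
\]
Since the left contraction of a vector with a vector is just the Euclidean scalar product, only the \emph{tangential} part of $(\skalarProd{\tau_j}{\partial})X$ survives the pairing with $\tau_j$. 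So the first step is to identify that tangential part. Because $X$ is normal, $\skalarProd{X}{\tau_k}\equiv 0$ along $\Mfg$ for each $k$; differentiating this identity in the direction $\tau_j$ gives
\[
 \skalarProd{(\skalarProd{\tau_j}{\partial})X}{\tau_k} = -\,\skalarProd{X}{(\skalarProd{\tau_j}{\partial})\tau_k}
 = -\,\skalarProd{X}{\big((\skalarProd{\tau_j}{\partial})\tau_k\big)^\perp}
 = -\,\skalarProd{X}{\tau_j \aus \S_{\tau_k}},
\]
where the last equality is exactly the identification of the normal part of a directional derivative of a tangent field with the second fundamental form, $\big((\skalarProd{a}{\partial})b\big)^\perp = a\aus\S_b$, recalled in subsection \ref{shape}.

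The second step is to reassemble the divergence. Since $\tau_k \aus \S_{\tau_j} = \tau_j\aus\S_{\tau_k}$ (the symmetry of the second fundamental form, also recalled in \ref{shape}), summing the displayed identity over $j=k$ yields
\[
 \skalarProd{\partial}{X} = \sum_{j=1}^m \skalarProd{(\skalarProd{\tau_j}{\partial})X}{\tau_j}
 = -\,\sum_{j=1}^m \skalarProd{X}{\tau_j\aus\S_{\tau_j}}
 = -\,\skalarProd{X}{\textstyle\sum_{j=1}^m \tau_j\aus\S_{\tau_j}}
 = -\,\skalarProd{X}{\H},
\]
using the coordinate expression \eqref{mcv1} for the mean curvature vector $\H = \sum_{j=1}^m \tau_j\aus\S_{\tau_j}$. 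Rearranging gives $\skalarProd{\partial}{X} + \skalarProd{\H}{X} \equiv 0$, which is the claim. One should also note that the computation is frame-independent: choosing $\tau_j$ to be a \emph{geodesic} (Riemannian normal coordinate) frame at the point under consideration, so that the tangential parts $(\skalarProd{\tau_j}{\partial})\tau_k$ vanish there, makes the middle step cleanest, and since the endpoints of the identity are manifestly basis-independent the conclusion holds everywhere on $\Mfg$.

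The only genuine subtlety — what I expect to be the main obstacle — is the bookkeeping of tangential versus normal components: one must be careful that $\skalarProd{\partial}{X}$ really only sees the tangential part of $(\skalarProd{\tau_j}{\partial})X$ (true because $\tau_j\aus(\cdot)$ against a normal vector vanishes), and that the sign and symmetry in differentiating $\skalarProd{X}{\tau_k}\equiv 0$ are handled correctly, so that the symmetrisation of the second fundamental form can legitimately be invoked to produce the trace $\H$ rather than some other contraction. Once the identification $\big((\skalarProd{a}{\partial})b\big)^\perp = a\aus\S_b$ and the symmetry $a\aus\S_b = b\aus\S_a$ from subsection \ref{shape} are in hand, the rest is a short and routine assembly.
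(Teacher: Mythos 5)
Your argument is correct, but it is not the route the paper takes: you have essentially reproduced the classical coordinate proof (the one the paper points to in Dierkes--Hildebrandt--Tromba and explicitly sets aside in favour of a coordinate-free one). The paper instead applies the curl operator $\dot{\partial}\wedge(\,\cdot\,)$ to the identity $X\aus\I\equiv 0$, which holds because $X$ is normal; the Leibniz rule produces two terms, the first of which equals $(\skalarProd{\partial}{X})\,\I$ because $\dot{\partial}$ behaves like a tangent vector, while the second becomes $-X\aus(\partial\wedge\I)=X\aus(\H\wedge\I)=(\skalarProd{X}{\H})\,\I$ by Theorem \ref{diffGaussmap}; dividing by the unit blade $\I$ gives the claim. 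Your proof buys elementarity and self-containedness: it needs only the Leibniz rule applied to $\skalarProd{X}{\tau_k}\equiv 0$, the identity $\big((\skalarProd{a}{\partial})b\big)^\perp=a\aus\S_b$, and the trace formula \eqref{mcv1}, and it does not rely on $\partial\I=-\H\I$ at all. (In fact you only ever use the diagonal terms $j=k$, so the symmetry $a\aus\S_b=b\aus\S_a$ and the geodesic-frame remark, while harmless, are not needed; the trace $\sum_j\tau_j\aus\S_{\tau_j}$ is frame-independent because the second fundamental form is tensorial.) What the paper's route buys is precisely the coordinate-free formulation advertised in the surrounding text, and it exhibits the lemma as a one-line consequence of the derivative formula for the Gauss map, which is the organising principle of Section \ref{sec3Lew}.
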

This is a classical relation, cf. \cite[p. 199]{DHT2}, but we give a
\begin{proof}[Coordinate-free proof]
 Taking curl on both sides of $X\aus\I\equiv0$, we gain:
 \[
   \dot{\partial}\wedge(\dot{X}\aus\I) + \dot{\partial}\wedge(X\aus\dot{\I}) \equiv 0.
 \]
Since $X$ is normal and $\dot{\partial}$ behaves like a tangent vector, the first summand can be rewritten as follows:
\begin{align*}
 \dot{\partial}\wedge(\dot{X}\aus\I) &= (\skalarProd{\partial}{X})\I, \\
 \intertext{and the second summand as follows:}
 \dot{\partial}\wedge(X\aus\dot{\I}) &= - X\aus(\partial\wedge\I)  = X\aus(\H\wedge \I) = (\skalarProd{X}{\H})\I.  
\end{align*}
\end{proof}

\noindent Recall that the mean curvature vector field $\H$ is called to be \textbf{parallel} on $\Mfg$ iff
\begin{equation}
  \big((\skalarProd{\tau_j}{\partial})\H\big)^\perp \equiv 0 \quad \text{for all ~} j=1, \ldots, m.
\end{equation}
Straightforward computations show the new
\begin{prop}\label{propHparallel}
 $\H$ is parallel if and only if ~ $\partial\wedge\H\equiv0$.
\end{prop}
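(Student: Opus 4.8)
The plan is to compute $\partial\wedge\H$ in a local orthonormal tangent frame $\tau_1,\ldots,\tau_m$ (the final answer being frame-independent, since $\partial\wedge\H$ is) and to read off its grade-two components. Writing
\[
 \partial\wedge\H=\sum_{j=1}^m\tau_j\wedge(\skalarProd{\tau_j}{\partial})\H,
\]
I would split each directional derivative into its tangential and normal parts, $(\skalarProd{\tau_j}{\partial})\H=\big((\skalarProd{\tau_j}{\partial})\H\big)^{\top}+\big((\skalarProd{\tau_j}{\partial})\H\big)^{\perp}$. Since $\partial$ behaves like a tangent vector, the terms $\tau_j\wedge\big((\skalarProd{\tau_j}{\partial})\H\big)^{\top}$ lie in $\bigwedge_2 T_x\Mfg$ whereas the terms $\tau_j\wedge\big((\skalarProd{\tau_j}{\partial})\H\big)^{\perp}$ lie in $T_x\Mfg\wedge N_x\Mfg$; as these are complementary subspaces of $\bigwedge_2\mathds{R}^{N}$, the vanishing of $\partial\wedge\H$ is equivalent to the separate vanishing of the two summands.

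The first, and essentially the only nontrivial, step is to show that the tangential summand vanishes identically. Differentiating the orthogonality relation $\skalarProd{\H}{\tau_i}\equiv0$ in the direction $\tau_j$ and using that $\H$ is normal, so that it pairs to zero with the tangential part of $(\skalarProd{\tau_j}{\partial})\tau_i$, one obtains
\[
 \skalarProd{(\skalarProd{\tau_j}{\partial})\H}{\tau_i}=-\skalarProd{\H}{\big((\skalarProd{\tau_j}{\partial})\tau_i\big)^{\perp}}=-\skalarProd{\H}{\tau_j\aus\S_{\tau_i}},
\]
the last equality being the expression of the second fundamental form recalled in section~\ref{shape}. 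By the symmetry $\tau_j\aus\S_{\tau_i}=\tau_i\aus\S_{\tau_j}$ these coefficients are symmetric in $i$ and $j$, hence their contraction against the antisymmetric family $\tau_j\wedge\tau_i$ vanishes. Therefore $\partial\wedge\H=\sum_{j=1}^m\tau_j\wedge\big((\skalarProd{\tau_j}{\partial})\H\big)^{\perp}$.

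Finally I would expand the normal vectors $\big((\skalarProd{\tau_j}{\partial})\H\big)^{\perp}$ in an orthonormal normal frame $\nu_1,\ldots,\nu_k$: the bivectors $\tau_j\wedge\nu_\alpha$ form part of a basis of $\bigwedge_2\mathds{R}^{N}$ and are in particular linearly independent, so $\partial\wedge\H\equiv0$ is equivalent to $\big((\skalarProd{\tau_j}{\partial})\H\big)^{\perp}\equiv0$ for every $j=1,\ldots,m$. By the definition recalled just before the proposition this says precisely that $\H$ is parallel, which establishes the equivalence. The main obstacle is the symmetry argument eliminating the purely tangential part of $\partial\wedge\H$; everything else is bookkeeping with the standard bivector basis. (A coordinate-free variant is possible — differentiating $\H\aus\I\equiv0$ in the spirit of the proof of Lemma~\ref{divMKV} — but the frame computation above seems the most transparent route.)
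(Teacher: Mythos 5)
Your proof is correct, and since the paper offers no written proof beyond the remark that ``straightforward computations'' establish the proposition, your frame computation --- splitting $(\skalarProd{\tau_j}{\partial})\H$ into tangential and normal parts, killing the tangential contribution via the symmetry $\tau_j\aus\S_{\tau_i}=\tau_i\aus\S_{\tau_j}$, and reading off the $T_x\Mfg\wedge N_x\Mfg$ components --- is exactly the intended argument. No gaps.
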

\noindent The advantage of the latter expression is that we are no longer forced to consider coordinates or normal parts and even have a nice expression for the parallelism of $\H$ in the language of geometric calculus.
Subsequently, we will talk about curl-free mean curvature vector field.

\subsection{The Second Derivative of the Gauss Map}
With regard to lemma \ref{divMKV} the expression \cite[(4-4.10)]{HS} can be simplified to
\begin{equation}
 \partial^2\I = -(\partial \wedge \H)\I.
\end{equation}
In fact, only the graded Laplacian remains from the second derivative of $\I$, cf. the grade comparison arguments \cite[(4-4.8), (4-4.11)]{HS}. Therefore, we arrive at the new
\begin{thm}\label{bladeI}
   \quad $\diamondsuit\I = - (\partial\wedge\H)\I$.
\end{thm}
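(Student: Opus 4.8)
The plan is to combine two facts already in hand: the first-derivative formula $\partial\I = -\H\I$ from Theorem~\ref{diffGaussmap}, and the identity (4.10-iv), $\diamondsuit F = \partial^2 F - \partial\wedge\partial\,\dot{F}$ applied to... wait, more directly, (4.11), which says $\Stufe{\partial^2\text{\boldmath $F$}_r}{r} = \diamondsuit\text{\boldmath $F$}_r$. Since $\I$ is an $m$-vector field, it suffices to compute $\partial^2\I$ and extract its grade-$m$ part. First I would compute $\partial^2\I = \partial(\partial\I) = \partial(-\H\I) = -(\partial\H)\I + (\text{correction terms from Leibniz})$; more precisely, using that $\partial$ is a first-order operator one writes $\partial(\H\I) = \dot{\partial}(\dot{\H}\I) + \dot{\partial}(\H\dot{\I})$, where the overdot indicates which factor $\partial$ differentiates. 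The first piece is $(\partial\wedge\H)\I + (\partial\aus\H)\I$ read appropriately, and the second piece is $\dot{\partial}(\H\dot{\I})$, which after using $\partial\I = -\H\I$ again produces a term like $-\H(\H\I)$-ish. So schematically $\partial^2\I = -(\partial\H)\I - (\text{something})(\H\I)$.

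Next I would invoke Lemma~\ref{divMKV}: taking $X = \H$ (which is normal, $\H = \H^\perp$), we get $\skalarProd{\partial}{\H} + \skalarProd{\H}{\H} = 0$, i.e. the scalar part of $\partial\H$ equals $-|\H|^2$. This is exactly the ingredient that kills the extra curvature-squared terms: the contraction part $\partial\aus\H$ is precisely $\skalarProd{\partial}{\H} = -|\H|^2$, while the product $\H\H$ (or $\skalarProd{\H}{\H}$) contributes $+|\H|^2$, and these cancel. After this cancellation — which is the reference to ``the grade comparison arguments \cite[(4-4.8),(4-4.11)]{HS}'' mentioned just before the theorem — one is left with $\partial^2\I = -(\partial\wedge\H)\I$, the displayed equation~(4.19). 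Finally, since $\partial\wedge\H$ is a bivector and $\I$ an $m$-blade, the geometric product $(\partial\wedge\H)\I$ has grades in $\{m-2, m, m+2\}$; but by (4.11), applied to the $m$-vector $\I$, only the grade-$m$ part survives in $\diamondsuit\I$. One then checks that the off-grade parts of $(\partial\wedge\H)\I$ in fact vanish — because $\H$ is normal and $\I$ is the tangent pseudoscalar, so for any tangent $\tau_j$ one has $\tau_j\aus\I$ of grade $m-1$ but $\H\wedge(\text{grade }m-1\text{ tangent blade})$, etc. — alternatively, one argues that $(\partial\wedge\H)\I$ is already purely grade $m$ because $\H\I$ is grade $m+1$... no: $\H$ normal times $\I$ tangent pseudoscalar is grade $m+1$, and $\partial$ acting brings it back. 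The cleaner route: $(\partial\wedge\H)\I$ — with $\partial\wedge\H$ having one tangent and one normal leg, or two normal legs — contracted against the tangent $m$-blade $\I$ lands in grade $m$ once the genuinely tangent-tangent contributions are accounted for; invoking (4.11) sidesteps this bookkeeping entirely, so I would lean on it.

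The main obstacle I anticipate is the careful Leibniz-rule bookkeeping in computing $\partial(\H\I)$ with the geometric (noncommutative) product, keeping track of which factor each $\partial$ differentiates and correctly decomposing each geometric product into contraction/commutator/wedge pieces by grade. In particular one must be sure that the ``$\dot{\partial}(\H\dot{\I})$'' term, re-expressed via $\partial\I=-\H\I$, really does combine with the scalar part of $\dot\partial\dot\H\,\I$ so that Lemma~\ref{divMKV} produces an exact cancellation rather than leaving a residual $|\H|^2\I$ term; this is the crux, and it is precisely where one needs that $\H$ is \emph{normal} (so $\H\aus\I = 0$ and $\H\I = \H\wedge\I$) together with the classical identity $\skalarProd{\partial}{\H} = -|\H|^2$. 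Once those two facts are deployed the rest is grade extraction via~(4.11).
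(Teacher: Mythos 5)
Your Leibniz computation is sound and is essentially the route the paper takes: differentiating $\partial\I=-\H\I$, using that the normal vector $\H$ anticommutes with the tangent frame vectors to rewrite $\dot{\partial}\H\dot{\I}=-\H\,\partial\I=|\H|^2\I$, and cancelling this against the $(\skalarProd{\partial}{\H})\I=-|\H|^2\I$ coming from $\dot{\partial}\dot{\H}\I$ via Lemma~\ref{divMKV}, does yield $\partial^2\I=-(\partial\wedge\H)\I$. This is precisely the simplification of \cite[(4-4.10)]{HS} that the paper performs, so up to this point you have reconstructed the argument correctly, including the two facts you flag as the crux ($\H$ normal, $\skalarProd{\partial}{\H}=-|\H|^2$).

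The gap is in your final step. Equation (4.11) gives only $\diamondsuit\I=\Stufe{\partial^2\I}{m}=\Stufe{-(\partial\wedge\H)\I}{m}$, whereas the theorem asserts equality with the \emph{full} geometric product $-(\partial\wedge\H)\I$; so invoking (4.11) does not ``sidestep the bookkeeping'' as you claim --- you still must show that $(\partial\wedge\H)\I$ has no grade-$(m\pm2)$ components, which is exactly the grade comparison the paper delegates to \cite[(4-4.8), (4-4.11)]{HS}. You gesture at this and then retreat. The missing argument is short: differentiating $\skalarProd{\tau_l}{\H}=0$ gives $\skalarProd{\tau_l}{(\skalarProd{\tau_j}{\partial})\H}=-\skalarProd{\tau_j\aus\S_{\tau_l}}{\H}$, which is symmetric in $j,l$ by the symmetry of the second fundamental form, so the purely tangential part of $\partial\wedge\H=\sum_j\tau_j\wedge(\skalarProd{\tau_j}{\partial})\H$ cancels and $\partial\wedge\H$ is a sum of mixed bivectors $\tau\wedge n$ with $\tau$ tangent and $n$ normal. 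For such a bivector both $(\tau\wedge n)\aus\I=\tau\aus(n\aus\I)=0$ and $(\tau\wedge n)\wedge\I=-n\wedge(\tau\wedge\I)=0$, so only the grade-preserving commutator part survives and $(\partial\wedge\H)\I$ is indeed purely of grade $m$. With that supplied, your proof is complete and coincides with the paper's.
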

And, a reinterpretation yields
\begin{cor}\label{interestingGauss}
  $\Mfg\subset\mathds{R}^{N}$ has a curl-free mean curvature vector field if and only if its Gauss map is graded-harmonic.
\end{cor}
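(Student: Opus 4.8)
The plan is to obtain the corollary as a direct reinterpretation of Theorem \ref{bladeI}, which already supplies the pointwise identity $\diamondsuit\I = -(\partial\wedge\H)\I$ on $\Mfg$. Unwinding the definitions, ``graded-harmonic Gauss map'' means $\diamondsuit\I\equiv 0$ and ``curl-free mean curvature vector field'' means $\partial\wedge\H\equiv 0$, so the whole statement reduces to showing that the multivector $(\partial\wedge\H)\,\I$ vanishes exactly when the bivector $\partial\wedge\H$ does.

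First I would dispose of the easy implication: if $\partial\wedge\H\equiv 0$, then Theorem \ref{bladeI} gives $\diamondsuit\I = -(\partial\wedge\H)\,\I\equiv 0$, so the Gauss map is graded-harmonic. For the converse, assume $\diamondsuit\I\equiv 0$; then Theorem \ref{bladeI} forces $(\partial\wedge\H)\,\I\equiv 0$ on $\Mfg$. Here the crucial point is that at each $x\in\Mfg$ the value $\I(x)$ is a \emph{unit} $m$-blade and is therefore invertible in the geometric algebra of $\mathds{R}^{N}$, with $\I^{-1}=\I^{\mathbf{t}}$ since $\I\I^{\mathbf{t}}=|\I|^2=1$. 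Multiplying the identity $(\partial\wedge\H)\,\I=0$ on the right by $\I^{-1}$ cancels the blade and yields $\partial\wedge\H\equiv 0$, i.e. $\H$ is curl-free.

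The only step that calls for a little care — and the closest thing here to an obstacle — is making sure that in the expression $(\partial\wedge\H)\,\I$ from Theorem \ref{bladeI} the operator $\partial$ differentiates only $\H$ and not the pseudoscalar $\I$, so that at a fixed point one is genuinely looking at the geometric product of two fixed multivectors, to which the cancellation-by-an-invertible-factor argument applies verbatim. Since $\partial\wedge\H$ is by construction the curl of the mean curvature vector field (a bivector field) multiplied geometrically by the blade $\I$, this is exactly the intended reading, so no genuine difficulty arises and the corollary follows immediately.
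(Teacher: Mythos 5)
Your proposal is correct and matches the paper's route: the corollary is read off directly from Theorem \ref{bladeI}, with the converse direction using that the unit blade $\I(x)$ is invertible (indeed the paper itself writes $\partial\wedge\H=-(\diamondsuit\I)\I^{-1}$ in the summary table, relying on exactly the cancellation you make explicit). Your extra care about $\partial$ acting only on $\H$ in $(\partial\wedge\H)\I$ is a reasonable precaution but introduces no new content.
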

\begin{rem}
In view of proposition \ref{propHparallel}, theorem \ref{bladeI} is a geometric calculus analog of Ruh and Vilms theorem, which links the tension field of the generalized Gauss map to the covariant derivative (in the normal bundle) of the mean curvature vector field as cross-sections, cf. \cite{RV}. Hence, the Gauss map $\I$ is graded-harmonic if and only if the generalized Gauss map $\mathbf{g}$ is harmonic in the sense of \cite{ES}. Indeed, rewriting $\diamondsuit \I \equiv 0$ in local coordinates yields an elliptic partial differential equation of second order.
\end{rem}

\subsection{The Normal Space}
The Gauss map is named after C. F. Gauss, who introduced it as \textit{normal map} for surfaces in $\mathds{R}^3$. For (oriented) hypersurfaces in $\mathds{R}^{m+1}$, one can define the Gauss map as a map into the unit sphere $\mathds{S}^m$ by considering the unit normal vector field. In higher codimensions we have more than one normal direction, and we are forced to deal with Grassmannians. We have considered the tangent unit simple $m$-vector field $\I$ so far. Certainly, we can also concentrate on the normal space and regard $x\mapsto\EuScript{N}(x)$ as the Gauss map where $\EuScript{N}=\EuScript{N}_k(x)$, with $k=N-m$, is the unit pseudoscalar in the geometric algebra of the normal space $N_x\Mfg$. The above calculations and corollaries will remain true, which is consistent with the fact that the Grassmannians $\widetilde{{\mathbf G}_m}(\mathds{R}^{N})$ and $\widetilde{{\mathbf G}_k}(\mathds{R}^{N})$ are diffeomorphic.
\medskip

\noindent Recall that at each point $x\in\Mfg$ we have the decomposition: \quad $\mathrm{i} = \I\, \EuScript{N}.$\\
In other words, $\EuScript{N}$ is the Hodge dual of $\I$, and with \eqref{HodgeDual} it follows:
\begin{equation}
 \EuScript{N} =\star \I = \I^\mathbf{t}\mathrm{i} = (-1)^{\frac{m(m-1)}{2}}\I\mathrm{i}.
\end{equation}
Hence, we gain similar formulae for the derivatives of $\EuScript{N}$:
\begin{equation}
 (\skalarProd{a}{\partial})\EuScript{N} = -\S_a\EuScript{N}, \quad \partial\EuScript{N}=-\H\EuScript{N},
 \quad \partial^2\EuScript{N}=-(\partial\wedge\H)\EuScript{N}.
\end{equation}
A grade comparison argument shows
\begin{align*}
  \partial\aus\EuScript{N}=-\H\aus\EuScript{N}, \qquad \partial\wedge\EuScript{N} = - \H\wedge\EuScript{N} \equiv 0
 \end{align*}
since \ $\H$ \ is normal and   $$ \partial\aus(\partial\aus\EuScript{N}) = -(\partial\wedge\H)\aus\EuScript{N} = - \dot{\partial}\aus(\dot{\H}\aus\EuScript{N}) \equiv 0 , $$
as \ $\dot{\H}\aus\EuScript{N}$ \ is a normal $(k-1)$-blade and \ $\dot{\partial}$ \ behaves like a tangent vector. Again, only the graded Laplacian is left over from the second derivative of $\EuScript{N}$, and we obtain
\begin{equation}\label{bladeN}
 \diamondsuit \EuScript{N} = -(\partial\wedge\H)\EuScript{N}.
\end{equation}
Consequently, the above theorems \ref{diffGaussmap} and \ref{bladeI} as well as corollaries \ref{monogenicGauss} and \ref{interestingGauss} remain true if we consider $\EuScript{N}$ instead of $\I$:
\begin{alignat*}{2}
\text{thm. \ref{diffGaussmap}:\qquad}&& \H&=-(\partial\I)\I^{-1}=-(\partial\EuScript{N})\EuScript{N}^{-1} \\
 \text{thm. \ref{bladeI}:\qquad}&& \partial\wedge\H&=-(\diamondsuit\I)\I^{-1}=-(\diamondsuit\EuScript{N})\EuScript{N}^{-1}
\end{alignat*}

and
\begin{longtable}{|lccc|}\hline
&&&\\[-0.25em]
 cor. \ref{monogenicGauss}:& $\H\equiv 0$  & $\Longleftrightarrow$ & $\partial \I \equiv 0\quad \Leftrightarrow\quad \partial \EuScript{N} \equiv 0$\\
 &$\Mfg$ minimal && Gauss-map monogenic\\[1em]\hline
 \multicolumn{1}{l}{}&&&\multicolumn{1}{c}{} \\
 \hline
 &&&\\[-0.25em]
  cor. \ref{interestingGauss}:& $\partial \wedge \H\equiv 0$ & $\Longleftrightarrow$ & $\diamondsuit \I \equiv 0\quad \Leftrightarrow\quad \diamondsuit \EuScript{N} \equiv 0$\\
 &$\H$ curl-free&& Gauss-map graded-harmonic\\[-0.5em]
 &&&\\\hline
\end{longtable}

\begin{rem}
 It is the geometric product, which allows us to work in a coordinate-free manner and to write down universal formulae. However, we have the different decompositions
 \begin{longtable}{lccccc}
 &$\partial \I = \partial \wedge \I$ & \hspace{2ex} and\hspace{2ex} & $\diamondsuit \I = \partial \aus (\partial \wedge \I)$ & since &  $\partial \aus \I \equiv 0$\\[2ex]
  whereas ~ &$\partial \EuScript{N} = \partial \aus \EuScript{N}$ &\hspace{2ex} and\hspace{2ex} & $\diamondsuit \EuScript{N} = \partial \wedge (\partial \aus \EuScript{N})$ & since & $\partial \wedge \EuScript{N} \equiv 0$.
\end{longtable}
\end{rem}

\begin{rem}
 With an orthonormal frame $\{n_1\ldots,n_k\}$ of $N_x\Mfg$, we gain the well-known expression for the mean curvature vector
 \begin{equation}
  \H = - (\partial \aus \EuScript{N})\EuScript{N}^{-1} = - \sum_{\alpha=1}^k \big(\skalarProd{\partial}{n_\alpha}\big)n_\alpha,\vspace{-1ex}
 \end{equation}
cf. \cite[sec. 4.3]{DHT1} resp. \cite[sec. 3.2]{DHT2}.
\end{rem}

\subsection{The Norm of the Second Fundamental Form}\label{Norm2FF}
The aim of this subsection is to show how the norm of the second fundamental form is contained in the graded Laplacian of the Gauss map. As before, $\{\tau_1, \ldots, \tau_m\}$ will denote an orthonormal frame of the tangent space $T_x\Mfg$ and $\{n_1, \ldots, n_k\}$ an orthonormal frame of the normal space $N_x\Mfg$. Although we always work at a particular point $x\in\Mfg$, we will again suppress the dependence on $x$ for notational simplicity.
As already seen in subsection \ref{shape}, the second fundamental form of $\Mfg$ at $x$ can be expressed by the shape operator and is given in the language of geometric calculus by \ $(.)\aus\S(.)$.\linebreak In particular, we have
\begin{subequations}
 \begin{equation}
 \tau_j\aus\S_{\tau_l} = \sum_{\alpha=1}^k(\skalarProd{n_\alpha}{\big((\skalarProd{\tau_l}{\partial})\tau_j\big)})n_\alpha = \sum_{\alpha=1}^k h_{\alpha jl}\, n_\alpha,
 \end{equation}
with the coefficients of the second fundamental form
\begin{equation}
 h_{\alpha jl}\coloneqq \skalarProd{n_\alpha}{\big((\skalarProd{\tau_l}{\partial})\tau_j\big)} = -\skalarProd{\tau_j}{\big((\skalarProd{\tau_l}{\partial})n_\alpha\big)}=(n_\alpha\wedge\tau_j)*\S_{\tau_l}.
\end{equation}
\end{subequations}
For the mean curvature vector we gain the familiar expression
\begin{equation}
 \H = \sum_{\alpha=1}^k\sum_{l=1}^m h_{\alpha ll}\, n_\alpha,
\end{equation}
and the norm of the second fundamental form is given by
\begin{equation}
  |\mathrm{B}|^2 \coloneqq \sum_{j,l=1}^m\sum_{\alpha=1}^k (h_{\alpha jl})^2.
\end{equation}
In geometric calculus this can be expressed in a coordinate-free manner via
\begin{lem}\label{NormIIFF}  \
$  |\mathrm{B}|^2 = \Stufe{\S(\partial_a), \S_a}{}.$
\end{lem}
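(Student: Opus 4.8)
The plan is to expand both sides in the orthonormal frames $\{\tau_j\}$ of $T_x\Mfg$ and $\{n_\alpha\}$ of $N_x\Mfg$, use the already-established identity $\partial\wedge\partial = \S(\partial)$ together with the coordinate expression $\tau_j\aus\S_{\tau_l} = \sum_\alpha h_{\alpha jl}\,n_\alpha$, and reduce the right-hand side to the double sum $\sum_{j,l,\alpha}(h_{\alpha jl})^2$, which is $|\mathrm{B}|^2$ by definition. First I would write out $\S(\partial_a)$ as $\dot\partial\,\dot\P(\partial_a)$, i.e. replace the "abstract" vector argument $\partial_a$ by the frame sum $\sum_l \tau_l(\skalarProd{\tau_l}{\partial_a})$, so that $\S(\partial_a) = \sum_l \S_{\tau_l}$ acting appropriately; more carefully, since $(.)\aus\S(.)$ is the second fundamental form, the cleanest route is to note $\Stufe{\S(\partial_a),\S_a}{} = \sum_{j}\Stufe{\tau_j\aus\S(\partial_a), \tau_j\aus\S_a}{}$ is \emph{not} immediate (the inner product on the left is on the full algebra), so instead I would directly compute $\S(\partial_a)$ as a multivector and pair it with $\S_a$ under $*$ / $\Stufe{.,.}{}$.

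The key steps, in order: (i) Using $\S_a = \dot\partial\wedge\dot\P(a)$ and the general agreement that directional derivatives exist, expand $\S_a$ in the frame to get $\S_a = \sum_{j} \tau_j\wedge\big(\sum_l (\skalarProd{\tau_l}{\partial})\tau_j\,\delta_{\text{appropriate}}\big)$; concretely the relevant quantity is $\tau_j\aus\S_{\tau_l}=\sum_\alpha h_{\alpha jl} n_\alpha$, and $\S_a = \sum_{l} \tau_l\wedge\S_{?}$ — the precise bookkeeping of which index is contracted and which is free is exactly where care is needed. (ii) Recognize $\S(\partial_a)$ as the result of letting $\dot\partial$ in $\partial\wedge\partial$ act, i.e. $\S(\partial_a) = \sum_{l} \tau_l\wedge\big((\skalarProd{\tau_l}{\partial})\partial_a\big)$-type expression, whose projection to bivectors, paired with the bivector-valued $\S_a$, picks out precisely the coefficients $h_{\alpha jl}$. (iii) Carry out the scalar product: since $\{n_\alpha\wedge\tau_j\}$ (or $\{\tau_j\wedge n_\alpha\}$) are orthonormal bivectors, $\Stufe{\S(\partial_a),\S_a}{} = \sum_{j,l,\alpha} h_{\alpha jl}\,h_{\alpha jl} = |\mathrm{B}|^2$, using the given identity $h_{\alpha jl} = (n_\alpha\wedge\tau_j)*\S_{\tau_l}$ to read off both factors. (iv) Conclude by comparing with the definition of $|\mathrm{B}|^2$.

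The main obstacle I expect is the correct handling of the "dotted" notation and of which vector slot the geometric/exterior product acts on in $\S(\partial_a)$ versus $\S_a$: one must make sure that $\partial_a\aus\S_a = \H$ (the trace) is \emph{not} what appears here, but rather the full contraction of the second fundamental form with itself, so that no trace is taken prematurely. In particular I would verify that $\S(\partial_a)$ is the \emph{bivector-valued} object $\sum_{j,l,\alpha} h_{\alpha jl}\,\tau_j\wedge n_\alpha$ (up to sign/ordering conventions from $\S_a = \dot\partial\wedge\dot\P(a)$ being a wedge), and that pairing it with $\S_a = \sum_{j,l,\alpha} h_{\alpha jl}\,(\text{same bivectors})$ under $\Stufe{.,.}{}$ — which on bivectors coincides with the induced norm — yields the unsigned sum of squares. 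A secondary, purely bookkeeping, point is checking that the repeated directional-derivative indices sum correctly so that each triple $(\alpha,j,l)$ is counted once; once the index structure is pinned down, the computation is routine. I would also remark that this lemma makes transparent the earlier claim that $\diamondsuit\I$ "contains" $|\mathrm{B}|^2$, since $\S(\partial)$ is exactly $\partial\wedge\partial$.
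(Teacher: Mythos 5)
Your outline is the paper's own argument: expand $\partial_a=\sum_l\tau_l(\skalarProd{\tau_l}{\partial_a})$ so that the tied slots reduce the left-hand side to $\sum_l\Stufe{\S(\tau_l),\S_{\tau_l}}{}=\sum_l|\S_{\tau_l}|^2$ (here $\S(\tau_l)=\S_{\tau_l}$ since the extra term $\skalarProd{\H}{\tau_l}$ vanishes for tangent arguments), and then read off $|\S_{\tau_l}|^2=\sum_{j,\alpha}(h_{\alpha jl})^2$ from the orthonormality of the mixed bivectors $\tau_j\wedge n_\alpha$. The paper does the second step by writing $\S_{\tau_j}=\sum_l\tau_l\,(\tau_l\aus\S_{\tau_j})$ and using that the $\tau_l\aus\S_{\tau_j}$ are normal vectors, which is the same computation in slightly different clothing. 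One point in your write-up would fail if taken literally: in the final paragraph you propose to evaluate $\S(\partial_a)$ as a standalone multivector $\sum_{j,l,\alpha}h_{\alpha jl}\,\tau_j\wedge n_\alpha$ and then pair it with a similarly tripled-summed $\S_a$. Because $\partial_a$ differentiates the $a$ inside $\S_a$, the two factors share the summation index $l$; pairing two \emph{independently} summed expressions would instead produce $\sum_{j,\alpha}\bigl(\sum_l h_{\alpha jl}\bigr)^2$, which is not $|\mathrm{B}|^2$ and contains exactly the spurious cross terms you were worried about. The correct bookkeeping is the diagonal sum $\sum_l\Stufe{\S_{\tau_l},\S_{\tau_l}}{}$, which is what your step (iii) asserts and what the paper's first equality makes precise; with that repaired, the rest of your computation goes through.
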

\begin{proof}
As \ $\partial_a=\sum\limits_{j=1}^{m}\tau_j (\skalarProd{\tau_j}{\partial_a})$ \ we have
 \begin{align*}
  \Stufe{\S(\partial_a), \S_a}{} &= \sum_{j=1}^m \Stufe{\S(\tau_j), \S_{(\skalarProd{\tau_j\;\!}{\!\;\partial_a})a}}{} = \sum_{j=1}^m \left|\S_{\tau_j}\right|^2 =  \sum_{j=1}^m\left|\sum_{l=1}^m \tau_l\ (\tau_l\aus\S_{\tau_j})\right|^2 = \\ &= \sum_{j,l=1}^m (\tau_l\aus\S_{\tau_j})^2 = |\mathrm{B}|^2
 \end{align*}
since the $\tau_l$'s are orthonormal and $\tau_l\aus\S_{\tau_j}$ is a normal vector.
\end{proof}

Furthermore, we need the relation
\begin{lem}\label{JacShapeBivec}
 \ $(\S_b\times\S_a)\times\I=0$.
\end{lem}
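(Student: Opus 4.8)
The claim $(\S_b\times\S_a)\times\I=0$ says precisely that the bivector $\S_b\times\S_a$ commutes with the tangent pseudoscalar $\I=\tau_1\wedge\ldots\wedge\tau_m$; the plan is to prove this by decomposing everything along the orthogonal splitting $\mathds{R}^{N}=T_x\Mfg\oplus N_x\Mfg$. First I would record the structural fact, visible already from the definition $\S_a=\dot\partial\wedge\dot\P(a)$ in subsection~\ref{shape} (cf.\ \cite[sec.~4-2]{HS}), that each shape bivector is of \emph{mixed} type: with orthonormal frames $\{\tau_j\}$ of $T_x\Mfg$ and $\{n_\alpha\}$ of $N_x\Mfg$ one may write $\S_a=\sum_{\alpha,j}(\ldots)\,n_\alpha\wedge\tau_j$, and likewise for $\S_b$; equivalently $\S_a,\S_b\in T_x\Mfg\wedge N_x\Mfg$, the ``off-diagonal'' part of $\bigwedge_2\mathds{R}^{N}\cong\mathfrak{so}(N)$.

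Next I would compute the commutator product. It is the grade-$2$ part $\S_b\times\S_a=\Stufe{\S_b\S_a}{2}$ of the geometric product (the symmetric grade-$0$ and grade-$4$ parts drop out of $\tfrac12(\S_b\S_a-\S_a\S_b)$). Writing $\S_b\S_a=\sum(\ldots)\,(n_\beta\tau_k)(n_\alpha\tau_j)$ and using $\tau_k n_\alpha=-n_\alpha\tau_k$ gives $(n_\beta\tau_k)(n_\alpha\tau_j)=-(n_\beta n_\alpha)(\tau_k\tau_j)$, a product of (scalar $+$ normal bivector) with (scalar $+$ tangent bivector). Its grade-$2$ part consists only of terms $\delta_{\alpha\beta}\,\tau_k\wedge\tau_j$ and $\delta_{jk}\,n_\beta\wedge n_\alpha$, the ``normal bivector times tangent bivector'' cross terms having grade $4$. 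Hence
\[
 \S_b\times\S_a=\mathbf{C}_T+\mathbf{C}_N,\qquad \mathbf{C}_T\in{\textstyle\bigwedge_2 T_x\Mfg},\quad \mathbf{C}_N\in{\textstyle\bigwedge_2 N_x\Mfg}.
\]
Conceptually this is the inclusion $\bigl[\,T\wedge N\,,\,T\wedge N\,\bigr]\subseteq\bigwedge_2 T\oplus\bigwedge_2 N$ in $\mathfrak{so}(N)$ --- the relation $[\mathfrak m,\mathfrak m]\subseteq\mathfrak h$ for the symmetric space $SO(N)/(SO(m)\times SO(k))$, whose isotropy algebra $\mathfrak h=\bigwedge_2 T\oplus\bigwedge_2 N$ is exactly the set of bivectors commuting with $\I$.

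It then remains to check that $\mathbf{C}_T$ and $\mathbf{C}_N$ each commute with $\I$. A bivector $A_2$ of $T_x\Mfg$ satisfies $A_2\I=(-1)^{2(m-1)}\I A_2=\I A_2$ (the pseudoscalar of an $m$-space commutes with its even-grade elements), so $\mathbf{C}_T\times\I=0$; and every factor of $\mathbf{C}_N$ is a normal vector, orthogonal to each $\tau_j$, so sliding $\mathbf{C}_N$ past $\I$ costs $(-1)^{2m}=1$, whence $\mathbf{C}_N\I=\mathbf{C}_N\wedge\I=\I\wedge\mathbf{C}_N=\I\mathbf{C}_N$ and $\mathbf{C}_N\times\I=0$. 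Adding the two, $(\S_b\times\S_a)\times\I=\mathbf{C}_T\times\I+\mathbf{C}_N\times\I=0$. The only step that really uses the geometry of $\Mfg$ is the first one --- that $\S_a$ and $\S_b$ have no purely tangent and no purely normal bivector part; everything afterwards is the standard grading of $\bigwedge_2\mathds{R}^{N}$ together with two sign computations.
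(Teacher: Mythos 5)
Your proof is correct, but it takes a genuinely different route from the paper. The paper's argument is purely algebraic and coordinate-free: it quotes from \cite[sec.~4-4]{HS} the single fact that the shape bivector \emph{anticommutes} with the pseudoscalar, uses it to show the symmetry $(\S_b\times\I)\times\S_a=(\S_a\times\I)\times\S_b$, and then concludes in one line from the Jacobi identity for the commutator product, $(\S_b\times\S_a)\times\I=(\I\times\S_a)\times\S_b-(\I\times\S_b)\times\S_a=0$. You instead unpack what that anticommutation means structurally --- $\S_a,\S_b$ lie in the mixed part $T_x\Mfg\wedge N_x\Mfg$ of $\bigwedge_2\mathds{R}^{N}$ --- compute the grade-$2$ part of $\S_b\S_a$ in orthonormal frames to see that the commutator lands in $\bigwedge_2T_x\Mfg\oplus\bigwedge_2N_x\Mfg$, and check by two sign counts that this subspace centralizes $\I$. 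Both proofs rest on the same geometric input (mixed type of the shape bivector, equivalently $\S_a\I=-\I\S_a$); the paper's version is shorter and frame-free, while yours buys an explicit identification of the statement with the symmetric-space relation $[\mathfrak m,\mathfrak m]\subseteq\mathfrak h$ for $SO(N)/(SO(m)\times SO(k))$, which is a worthwhile conceptual gloss given the role of the Grassmannian in section~\ref{sec1Lew}. All the individual steps check out: the scalar and $4$-vector parts of $\S_b\S_a$ are indeed symmetric in $\S_a,\S_b$ and so drop out of the commutator, and the signs $(-1)^{2(m-1)}$ and $(-1)^{2m}$ are right.
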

\begin{proof}
Since the shape bivector anticommutes with the pseudoscalar, cf. \cite[sec. 4-4]{HS}, we have
\begin{align*}
 (\S_b\times\I)\times\S_a& = \frac{1}{2} (\S_b\I\S_a-\S_a\S_b\I) = \frac{1}{2}(-\S_b\S_a\I + \S_a\I\S_b)=\\
  &= (\S_a\times \I)\times\S_b.
\end{align*}
The Jacobi identity for the commutator product \cite[(1-1.56c)]{HS} yields then the desired result:
\[
 (\S_b\times\S_a)\times\I= (\I\times\S_a)\times\S_b - (\I\times\S_b)\times\S_a = 0.
\]
\end{proof}

We will now examine the graded Laplace operator of the Gauss map:
\begin{align*}
\diamondsuit \I &= \Stufe{\partial^2\I}{m} = \Delta_\Mfg\I + \Stufe{\S(\dot{\partial})\dot{\I}}{m}=\\
   &= \Delta_\Mfg \I - \Stufe{\S(\partial_a)\S_a\I}{m}=\\
   &= \Delta_\Mfg\I - \big(\S(\partial_a)*\S_a\big)\I - \big(\S(\partial_a)\times\S_a\big)\times\I - \Stufe{\big(\S(\partial_a)\wedge\S_a\big)\I}{m}
\end{align*}
where we used the fact that $\S_a$ is a bivector. By lemma \ref{NormIIFF} we have
\[
 \S(\partial_a)*\S_a = -  |\mathrm{B}|^2
\]
and by lemma \ref{JacShapeBivec} we obtain
\[
 \big(\S(\partial_a)\times\S_a\big)\times\I\equiv0.
\]
Hence, we gain the following formula for the graded Laplacian of the Gauss map:
\begin{equation}
  \boxed{\diamondsuit\I = \Delta_\Mfg\I + |\mathrm{B}|^2\I - \Stufe{\big(\S(\partial_a)\wedge\S_a\big)\I}{m}}~. \label{DiamondIausgesch}\vspace{1ex}
\end{equation}
Note that the $4$-vector $\S(\partial_a)\wedge\S_a$ does not generally vanish. Our theorem \ref{bladeI} then takes the form
\begin{prop}[Jacobi's field equation] \label{Jacobisfieldeq} The Gauss map on $\Mfg$ fulfills
 \[ \Delta_\Mfg\I + |\mathrm{B}|^2\I - \Stufe{\big(\S(\partial_a)\wedge\S_a\big)\I}{m}
 +(\partial\wedge\H)\I \equiv 0.\]
\end{prop}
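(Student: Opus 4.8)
The plan is to obtain the statement by simply confronting the two independent evaluations of $\diamondsuit\I$ that have already been assembled. On the one hand, theorem \ref{bladeI} gives the geometric evaluation $\diamondsuit\I = -(\partial\wedge\H)\I$, which came from simplifying $\partial^2\I$ by means of lemma \ref{divMKV} together with the grade-comparison arguments. On the other hand, the boxed identity \eqref{DiamondIausgesch} gives the analytic evaluation
\[
\diamondsuit\I = \Delta_\Mfg\I + |\mathrm{B}|^2\I - \Stufe{\big(\S(\partial_a)\wedge\S_a\big)\I}{m},
\]
which rests on lemma \ref{NormIIFF} (identifying $\S(\partial_a)*\S_a = -|\mathrm{B}|^2$) and on lemma \ref{JacShapeBivec} (which kills the commutator contribution $\big(\S(\partial_a)\times\S_a\big)\times\I$). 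Equating the two right-hand sides and transposing $-(\partial\wedge\H)\I$ to the left immediately yields
\[
\Delta_\Mfg\I + |\mathrm{B}|^2\I - \Stufe{\big(\S(\partial_a)\wedge\S_a\big)\I}{m} + (\partial\wedge\H)\I \equiv 0,
\]
which is the assertion.

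The one point deserving a word of care is that both expressions are genuinely $m$-vector-valued, so the identification is legitimate and nothing of a different grade is being silently dropped: by construction $\diamondsuit\I=\Stufe{\partial^2\I}{m}$; $\Delta_\Mfg\I$ is $m$-vector-valued because $\Delta_\Mfg$ acts as a scalar; $|\mathrm{B}|^2\I$ is manifestly an $m$-vector; the term $\Stufe{\big(\S(\partial_a)\wedge\S_a\big)\I}{m}$ is $m$-vector-valued by the definition of the grade projection; and $(\partial\wedge\H)\I$ reduces to its grade-$m$ part after the very grade-comparison arguments already used for theorem \ref{bladeI}, since $\partial\wedge\H$ is a bivector and $\H$ is normal. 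Hence all four terms live in $\bigwedge_m\mathds{R}^{N}$ and the rearrangement is valid verbatim.

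There is therefore no real obstacle remaining at this stage; the substantial work has all been done in theorem \ref{bladeI}, in the computation culminating in \eqref{DiamondIausgesch}, and in lemmas \ref{NormIIFF} and \ref{JacShapeBivec}. The proposition is purely a matter of rewriting, and its interest lies in the interpretation: the Gauss map of $\Mfg$ satisfies a second-order elliptic system of exactly the shape of a higher-codimensional Jacobi equation, with $|\mathrm{B}|^2\I$ playing the role of the index-form (curvature) term, the $4$-vector contribution $\Stufe{\big(\S(\partial_a)\wedge\S_a\big)\I}{m}$ measuring the failure of $\S(\partial_a)\wedge\S_a$ to vanish, and $(\partial\wedge\H)\I$ measuring the non-parallelism of the mean curvature vector, cf. proposition \ref{propHparallel}; in particular, for a minimal submanifold one has $\H\equiv0$, so $\partial\wedge\H\equiv0$ and the last term drops out.
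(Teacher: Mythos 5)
Your proposal is correct and coincides with the paper's own derivation: the paper obtains proposition \ref{Jacobisfieldeq} exactly by equating the boxed identity \eqref{DiamondIausgesch} with theorem \ref{bladeI} and transposing. Your supplementary grade bookkeeping (in particular that $(\partial\wedge\H)\I$ is purely $m$-vector-valued) is consistent with the grade-comparison arguments the paper already invokes for theorem \ref{bladeI}.
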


\hfill

So, corollary \ref{interestingGauss} implies
\begin{lem}\label{rotH}
 Let $\Mfg$ have a curl-free mean curvature vector field. Then the Gauss map on $\Mfg$ fulfills
 $$ \Delta_\Mfg\I + |\mathrm{B}|^2\I - \Stufe{\big(\S(\partial_a)\wedge\S_a\big)\I}{m}\equiv0.$$
\end{lem}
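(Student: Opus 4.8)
The plan is to obtain Lemma~\ref{rotH} as an immediate consequence of the explicit formula \eqref{DiamondIausgesch} for the graded Laplacian of the Gauss map, combined with corollary \ref{interestingGauss}. All of the substantive work — lemma \ref{NormIIFF} identifying $\S(\partial_a)*\S_a=-|\mathrm{B}|^2$, lemma \ref{JacShapeBivec} killing the commutator term via the Jacobi identity, the grade-wise splitting of $\S(\dot{\partial})\dot{\I}$ into scalar, commutator and wedge parts, and the anticommutation of the shape bivector with the pseudoscalar — has already been carried out in deriving \eqref{DiamondIausgesch}, so what remains is essentially bookkeeping.

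First I would apply corollary \ref{interestingGauss}: by hypothesis $\Mfg$ has a curl-free mean curvature vector field, i.e.\ $\partial\wedge\H\equiv0$ on $\Mfg$, which is precisely the condition that the Gauss map be graded-harmonic, so $\diamondsuit\I\equiv0$ on $\Mfg$. (Equivalently one may cite theorem \ref{bladeI} directly, since $\diamondsuit\I=-(\partial\wedge\H)\I$ vanishes the moment $\partial\wedge\H$ does.) Then I would insert $\diamondsuit\I=0$ into the boxed identity \eqref{DiamondIausgesch},
\[
 \diamondsuit\I=\Delta_\Mfg\I+|\mathrm{B}|^2\I-\Stufe{\big(\S(\partial_a)\wedge\S_a\big)\I}{m},
\]
and read off $\Delta_\Mfg\I+|\mathrm{B}|^2\I-\Stufe{\big(\S(\partial_a)\wedge\S_a\big)\I}{m}\equiv0$, which is exactly the assertion.

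I do not expect any genuine obstacle here; Lemma~\ref{rotH} is a corollary in the logical sense, and the only remaining choice is cosmetic. A fully self-contained alternative would start instead from proposition \ref{Jacobisfieldeq} (Jacobi's field equation), observe that under the curl-free hypothesis the summand $(\partial\wedge\H)\I$ drops out, and conclude — but that merely repackages the same two ingredients, theorem \ref{bladeI} and the expansion \eqref{DiamondIausgesch}.
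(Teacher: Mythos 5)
Your proposal is correct and is exactly the paper's argument: the paper derives Lemma~\ref{rotH} by combining corollary \ref{interestingGauss} (curl-free $\H$ $\Leftrightarrow$ $\diamondsuit\I\equiv0$) with the boxed expansion \eqref{DiamondIausgesch}, which is precisely your route. Your alternative remark about dropping the $(\partial\wedge\H)\I$ term from proposition \ref{Jacobisfieldeq} is likewise just the same computation repackaged, as you note.
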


We apply this result to prove
\begin{lem}\label{gewuenschteUngl}
  Let $\Mfg$ have a curl-free mean curvature vector field. If for some fixed $I\in\left[\bigwedge_m\mathds{R}^{N}\right]$ the Gauss map satisfies $\Stufe{\I(x),I}{}>0$ for all $x\in\Mfg$, then\vspace{-1ex}
  $$ -\Delta_\Mfg \ln \Stufe{\I,I}{}=  |\mathrm{B}|^2 + \frac{|\partial_a\Stufe{\S_a\I,I}{}|^2- \Stufe{\big(\S(\partial_a)\wedge\S_a\big)\I,I}{}\Stufe{\I,I}{}}{\Stufe{\I,I}{}^2}\raisebox{-1.5ex}{.} $$
\end{lem}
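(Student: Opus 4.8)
The plan is to compute $\Delta_\Mfg \ln \Stufe{\I,I}{}$ directly via the chain rule and then substitute the identity from Lemma~\ref{rotH}. Since $I$ is a \emph{fixed} blade, the scalar function $\varphi \coloneqq \Stufe{\I,I}{}$ is well-defined and positive by hypothesis, and for any directional derivative $(\skalarProd{\tau_j}{\partial})$ one has $(\skalarProd{\tau_j}{\partial})\Stufe{\I,I}{} = \Stufe{(\skalarProd{\tau_j}{\partial})\I,\,I}{}$ because $I$ does not depend on $x$. First I would write
\[
 \Delta_\Mfg \ln\varphi = \frac{\Delta_\Mfg\varphi}{\varphi} - \frac{|\partial\varphi|^2}{\varphi^2},
\]
which follows from $(\skalarProd{\partial}{\partial})$ behaving like a scalar together with the elementary one-variable identities $(\ln\varphi)'' = \varphi''/\varphi - (\varphi'/\varphi)^2$ applied coordinatewise in Riemannian normal coordinates (where $\Delta_\Mfg = \sum_j (\skalarProd{\tau_j}{\partial})^2$ at the point).

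Next I would identify the two pieces. For the gradient term, $(\skalarProd{\tau_j}{\partial})\varphi = \Stufe{(\skalarProd{\tau_j}{\partial})\I,\,I}{}$, and using $(\skalarProd{a}{\partial})\I = -\S_a\I$ (the first-derivative formula for the Gauss map recorded in subsection~\ref{shape}, namely $\partial\wedge\partial = \S(\partial)$ together with $(\skalarProd{a}{\partial})\I$; more directly this is $(\skalarProd a\partial)\I=-\S_a\I$ as used in the computation of $\diamondsuit\I$), we get $(\skalarProd{\tau_j}{\partial})\varphi = -\Stufe{\S_{\tau_j}\I,\,I}{}$, hence $|\partial\varphi|^2 = \sum_j \Stufe{\S_{\tau_j}\I,I}{}^2 = |\partial_a\Stufe{\S_a\I,I}{}|^2$ in the coordinate-free shorthand of the paper. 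For the Laplacian term, I apply the scalar $\Stufe{\cdot,I}{}$ to the identity of Lemma~\ref{rotH}:
\[
 \Delta_\Mfg\varphi = \Stufe{\Delta_\Mfg\I,\,I}{} = -|\mathrm{B}|^2\Stufe{\I,I}{} + \Stufe{\big(\S(\partial_a)\wedge\S_a\big)\I,\,I}{}.
\]
Substituting both into the chain-rule expression, multiplying through by $-1$, and collecting the $|\mathrm{B}|^2$ term (which survives as $+|\mathrm{B}|^2$ after dividing $\Delta_\Mfg\varphi$ by $\varphi$) yields exactly the claimed formula, with the remaining two terms over $\varphi^2 = \Stufe{\I,I}{}^2$.

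The main technical point to be careful about is the commutation of the directional derivative with the scalar product $\Stufe{\cdot,I}{}$ and the legitimacy of computing $\Delta_\Mfg$ coordinatewise: this is precisely where Riemannian normal coordinates are needed, since only there does $\Delta_\Mfg = \sum_j(\skalarProd{\tau_j}{\partial})^2$ hold at the base point without Christoffel correction terms, and one must also check that the second-order terms acting on $I$ genuinely vanish (they do, trivially, as $I$ is constant). A secondary subtlety is bookkeeping of signs: the formula $(\skalarProd{a}{\partial})\I = -\S_a\I$ contributes a sign to $(\skalarProd{\tau_j}{\partial})\varphi$ that is immaterial after squaring, but the sign in $\Delta_\Mfg\I = -|\mathrm{B}|^2\I + \Stufe{(\S(\partial_a)\wedge\S_a)\I}{m}$ from Lemma~\ref{rotH} must be tracked correctly, so that after the overall multiplication by $-1$ the $|\mathrm{B}|^2$ appears with a $+$ sign and the curl-free hypothesis is genuinely used (it is what lets us drop the $(\partial\wedge\H)\I$ term from Proposition~\ref{Jacobisfieldeq}). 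No step is deep; the whole proof is a chain-rule computation glued to Lemma~\ref{rotH}, and I expect it to occupy only a few lines.
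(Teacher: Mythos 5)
Your proposal is correct and follows essentially the same route as the paper: the paper's proof likewise applies the logarithmic chain rule, rewrites $|\partial\Stufe{\I,I}{}|^2$ as $|\partial_a\Stufe{\S_a\I,I}{}|^2$ (using $(\skalarProd{a}{\partial})\I=-\S_a\I$, where the sign is irrelevant after squaring), and then substitutes Lemma~\ref{rotH} for $\Stufe{\Delta_\Mfg\I,I}{}$. Your sign bookkeeping and your remark that the curl-free hypothesis enters only through Lemma~\ref{rotH} are both accurate.
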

\begin{proof}
 We have\vspace{-1ex}
 \begin{align*}
  -\Delta_\Mfg \ln \Stufe{\I,I}{} &= \frac{|\partial\Stufe{\I,I}{}|^2-\Stufe{\I,I}{}\Delta_\Mfg\Stufe{\I,I}{}}{\Stufe{\I,I}{}^2}=\\[2ex]
  &=\frac{|\partial_a\Stufe{\S_a\I,I}{}|^2-\Stufe{\I,I}{}\Stufe{\Delta_\Mfg\I,I}{}}{\Stufe{\I,I}{}^2}\raisebox{-1.5ex}{,}
 \end{align*}
and the statement follows by lemma \ref{rotH}.
\end{proof}

\begin{rem}
A crucial ingredient in derivations of Bernstein type theorems is the superharmoniticity of \ $\ln \Stufe{\I,I}{}$ \ or rather the estimate
\[
 -\Delta_\Mfg \ln \Stufe{\I,I}{} \geq \delta |\mathrm{B}|^2 \qquad \text{for a \ $\delta >0$}
\]
on $\Mfg$, cf. \cite{Fis}, \cite{Wang}, \cite{JXY}. We will see that this is the case on hypersurfaces, cf. prop. \ref{LewProp1} below, but can also be obtained in the following situations:
 \begin{itemize}
  \item on minimal surfaces $\Mfg\subset\mathds{R}^{2+k}$ (two-dimensional),
  \item on minimal three-dimensional cones,
 \end{itemize}
cf. \cite{Fis}, \cite{Wang}. Due to the non-trivial example of a minimal non-parametric \linebreak $4$-dimensional cone in $\mathds{R}^{7}$ constructed by Lawson and Osserman in \cite{LO}, such an estimate is a priori not valid in dimensions $\geq4$ and codimensions $\geq3$. However, it can be obtained in all dimensions and all codimensions under suitable additional conditions, cf. \cite{Sms}, \cite{Reilly}, \cite{Fis}, \cite{HJW}, \cite{JX}, \cite{Wang}, \cite{JXY}.
\end{rem}

\begin{rem}
 Of course, we can again consider $\EuScript{N}$ instead of $\I$ in the last examinations. So, especially in the case of codimension $1$, the situation gets much simpler:
\end{rem}

\subsection{Hypersurfaces in Euclidean Space}\label{bladeNormlvf}
``\textit{For a hypersurface in Euclidean space, the role of its pseudoscalar $\I$ can be taken over by its normal $n$}'', cf. \cite[sec. 5-2]{HS}. The shape bivector becomes:
\[
 \S_a=n\wedge\big((\skalarProd{a}{\partial}) n\big),
\]
cf. \cite[(5-2.2)]{HS} and is, in fact, a \textit{simple} bivector.

For the second fundamental form we obtain the expression
\[
 b\aus\S_a = -(\skalarProd{b}{\big((\skalarProd{a}{\partial}) n\big)})\, n,
\]
in exact agreement with \cite[p. 299]{DHT1}.

Since there is the sole normal direction, the mean curvature vector differs from the unit normal only by a scalar factor, namely the \textbf{mean curvature}
\[
 \EuScript{H} = - \skalarProd{\partial}{n}.
\]

\begin{rem}
 The sign in the definition of the mean curvature depends on the choice of the direction of the normal vector. However, the definition of the mean curvature \textit{vector} is unique. Osserman \cite[p. 1095]{Oss} gives a picturesque description:\\
\textit{``\ $\H$ may be pictured as pointing toward the `inside' of $\Mfg$, in the sense that if $\Mfg$ is deformed by moving each point in the direction of the mean curvature vector at that point, then the volume of $\Mfg$ will initially decrease.''}
\end{rem}

Furthermore, since $\S_a$ is a simple bivector having $n$ as an exterior factor, we get the following  relation:
\begin{equation}\label{tolltollinEins}
  \S(\partial_a)\wedge\S_a \equiv 0.
\end{equation}
Thus, with \eqref{DiamondIausgesch} we obtain for the graded Laplacian of $n$ :
\begin{equation}\label{LewEQ1}
\diamondsuit n = \Delta_\Mfg n + |\mathrm{B}|^2 n.
\end{equation}
Moreover, we have
\[
 (\partial\wedge\H)\times n = \sum_{j=1}^m \tau_j \ \skalarProd{n}{\big((\skalarProd{\tau_j}{\partial})\H\big)}.
\]
Due to
\[
 (\skalarProd{\tau_j}{\partial})\H = (\skalarProd{\tau_j}{\partial}) \big(\EuScript{H} n\big)= n\,  (\skalarProd{\tau_j}{\partial})\EuScript{H} + \EuScript{H}\, \big((\skalarProd{\tau_j}{\partial}) n\big)
\]
and to
\[
 \skalarProd{n}{\big((\skalarProd{\tau_j}{\partial}) n\big)}=0 \qquad \text{ since \ \ $n^2=\skalarProd{n}{n}=1$},
\]
we conclude that
\begin{equation}\label{LewEQ2}
  (\partial\wedge\H)\times n = \sum_{j=1}^m\tau_j\, (\skalarProd{\tau_j}{\partial})\EuScript{H}=\partial \EuScript{H}.
\end{equation}

In view of \eqref{LewEQ1} and \eqref{LewEQ2}, the equation \eqref{bladeN} -- in the codimension one case -- becomes
\begin{equation}\tag{\ref{bladeN}'}
  \Delta_\Mfg n + |\mathrm{B}|^2 n + \partial\EuScript{H}=0,
\end{equation}
 i.e. exactly the \textit{Jacobi's field equation}, cf. \cite[p. 163]{DHT2}. Hence, its generalization for higher codimension was given by our theorem \ref{bladeI} or rather by prop. \ref{Jacobisfieldeq}.\\[-1ex]

Furthermore, by \eqref{tolltollinEins} and lemma \ref{gewuenschteUngl} we obtain
\begin{prop}\label{LewProp1}
 Let $\Mfg\subset\mathds{R}^{m+1}$ be an $m$-dimensional hypersurface of constant mean curvature. If for some fixed $I\in\left[\bigwedge_m\mathds{R}^{m+1}\right]$ the Gauss map satisfies $\Stufe{\I(x),I}{}>0$ for all $x\in\Mfg$, then $$ -\Delta_\Mfg \ln \Stufe{\I,I}{} \geq |\mathrm{B}|^2 \qquad\text{on $\Mfg$.}$$
\end{prop}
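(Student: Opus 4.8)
The plan is to apply Lemma~\ref{gewuenschteUngl} in the hypersurface situation and to show that the extra term there is non-negative. First I would observe that a hypersurface of constant mean curvature has $\partial\EuScript{H}\equiv0$, hence by \eqref{LewEQ2} it satisfies $(\partial\wedge\H)\times n\equiv0$; since $\S_a$ is a simple bivector with $n$ as an exterior factor we also have \eqref{tolltollinEins}, i.e. $\S(\partial_a)\wedge\S_a\equiv0$, so both hypotheses needed for Lemma~\ref{rotH} and Lemma~\ref{gewuenschteUngl} (curl-free $\H$) are in force. Applying Lemma~\ref{gewuenschteUngl} with the pseudoscalar $\I$ (equivalently working with $n$) gives
\[
 -\Delta_\Mfg \ln \Stufe{\I,I}{}=  |\mathrm{B}|^2 + \frac{|\partial_a\Stufe{\S_a\I,I}{}|^2- \Stufe{\big(\S(\partial_a)\wedge\S_a\big)\I,I}{}\Stufe{\I,I}{}}{\Stufe{\I,I}{}^2}.
\]

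Next I would kill the second term in the numerator: by \eqref{tolltollinEins} the $4$-vector $\S(\partial_a)\wedge\S_a$ vanishes identically on a hypersurface, so $\Stufe{\big(\S(\partial_a)\wedge\S_a\big)\I,I}{}\equiv0$. What remains on the right-hand side is
\[
 |\mathrm{B}|^2 + \frac{|\partial_a\Stufe{\S_a\I,I}{}|^2}{\Stufe{\I,I}{}^2},
\]
and here the point is simply that $|\partial_a\Stufe{\S_a\I,I}{}|^2\geq0$ (it is the squared norm of a tangent vector field, namely $\sum_j(\skalarProd{\tau_j}{\partial}\Stufe{\S_a\I,I}{})^2$ in an orthonormal frame) while $\Stufe{\I,I}{}^2>0$ by the standing hypothesis $\Stufe{\I(x),I}{}>0$. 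Dropping this manifestly non-negative term yields $-\Delta_\Mfg \ln \Stufe{\I,I}{}\geq |\mathrm{B}|^2$ on $\Mfg$, which is the claim.

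There is no real obstacle here once Lemma~\ref{gewuenschteUngl} is available; the proposition is essentially the specialization of that lemma to codimension one, where the genuinely higher-codimensional term $\S(\partial_a)\wedge\S_a$ disappears for the structural reason recorded in \eqref{tolltollinEins}. The only point to be a little careful about is the translation between the formulation in terms of $\I$ and the one in terms of $n$: one should note that for a hypersurface $\Stufe{\I,I}{}$ and the corresponding pairing of the unit normals differ at most by a sign that is constant on (a component of) $\Mfg$ under the positivity assumption, so that the identities \eqref{LewEQ1}, \eqref{LewEQ2} and \eqref{bladeN}' about $n$ can be used interchangeably with the $\I$-statements, and in particular ``constant mean curvature'' is exactly ``$\partial\EuScript{H}\equiv0$'', which via \eqref{LewEQ2} is exactly ``$\H$ curl-free''.
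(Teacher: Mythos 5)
Your proposal is correct and follows exactly the route the paper intends (the paper gives no separate proof beyond ``by \eqref{tolltollinEins} and lemma \ref{gewuenschteUngl}''): verify curl-freeness of $\H$, apply Lemma \ref{gewuenschteUngl}, annihilate the $4$-vector term via \eqref{tolltollinEins}, and drop the remaining manifestly non-negative quotient. The one small imprecision is deducing $\partial\wedge\H\equiv 0$ from $(\partial\wedge\H)\times n\equiv 0$ alone; it is cleaner to note that on a hypersurface $\partial\wedge\H=(\partial\EuScript{H})\wedge n$ with $\partial\EuScript{H}$ tangential (or that constant $\EuScript{H}$ makes $\H$ parallel, so Proposition \ref{propHparallel} applies), but either way the conclusion stands.
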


\begin{rem}
 In fact, an \textit{entire} constant mean curvature graph is minimal, cf. \cite{Chern2}, so, arguments as in \cite[sec. 5]{Fis} yield Moser's Bernstein theorem \cite{Moser}.
\end{rem}

\subsection{Intrinsic Geometry}\label{LewIntrinsicGeo}
More relations to classical differential geometry can be established if we reduce our observations of the derivatives to their \textit{tangential} parts. In particular, one can find the exterior and the adjoint derivative, the Dirac operator, as well as the Hodge-de Rham-Laplacian, and, of course, all the relations between them, known from standard monographs like \cite{LM} and \cite{Jost}. These have been partially deduced in \cite[sec. 4-3, ch 6]{HS}. Note that the tangential parts of the derivatives of the Gauss map do not, however, contain the crucial information used in the above discussions, e.g.
\begin{equation}
  \nabla \I \equiv 0,
\end{equation}
cf. also \cite[(5.25)]{LM}, and thus, the beautiful relation to the mean curvature vector field gets lost. For the second derivative it follows
\begin{equation}
 \nabla^2 \I \equiv 0.
\end{equation}
The operator $\nabla^2$ is called `colaplacian' in \cite{HS}. Indeed, it corresponds to the well known Hodge-de Rham-Laplace operator, cf. also \cite[(4-3.13)]{HS}. But the lack of the normal components would not yield the above theorems.

%\subsection*{Acknowledgment}

% ------------------------------------------------------------------------
\end{document}